\newcommand{\LR}{\Leftrightarrow}
\newcommand{\lra}{\leftrightarrow}
\newcommand{\ttext}[1]{\ \text{#1}\ }
 \newcommand{\leT}{\le_T}
\newcommand{\geT}{\ge_T}
\newcommand{\fa}{\forall}
\newcommand{\frb}{\mathfrak{b}}
\newcommand{\frd}{\mathfrak{d}}
\newcommand{\frs}{\mathfrak{s}}
\newcommand{\frr}{\mathfrak{r}}
\newcommand{\calN}{\mathcal{N}}
\newcommand{\calS}{\mathcal{S}}
\newcommand{\calC}{\mathcal{C}}
\newcommand{\calE}{\mathcal{E}}
\newcommand{\calI}{\mathcal{I}}
\newcommand{\calJ}{\mathcal{J}}
\newcommand{\R}{\mathbb{R}}
\newcommand{\calF}{\mathcal{F}}
\newcommand{\calM}{\mathcal{M}}
\newcommand{\frv}{\mathfrak{v}}
\newcommand{\frw}{\mathfrak{w}}
\newcommand{\calB}{\mathcal{B}}
\newcommand{\calD}{\mathcal{D}}
\newcommand{\calG}{\mathcal{G}}
\newcommand\+[1]{\mathcal{#1}}
\newcommand{\se}{\mathfrak{se}}
\newcommand{\sN}[1]{_{#1\in \omega}}
\newcommand{\sub}{\subseteq}
\DeclareMathOperator{\cov}{cover}
\DeclareMathOperator{\non}{non}
\DeclareMathOperator{\cof}{cofin}
\DeclareMathOperator{\add}{add}
\DeclareMathOperator{\dom}{dom}
\newcommand{\Baire}{{}^\omega\omega}
\newcommand{\Cantor}{{}^\omega2 }
\begin{document}

\title{An analogy between cardinal characteristics   and   highness properties of oracles}

\author{J\"org Brendle}
\address{Graduate School of System Informatics\\
Kobe University\\
Rokko-dai 1-1\\
Nada, Kobe, 657-8501\\
Japan\\
Email: brendle@kurt.scitec.kobe-u.ac.jp}

\author{Andrew Brooke-Taylor}
\address{School of Mathematical Sciences\\
University of Bristol\\
University Walk\\
Bristol, BS8 1TW\\
United Kingdom\\
Email: a.brooke-taylor@bristol.ac.uk}

\author{Keng Meng Ng}
\address{Division of Mathematical Sciences\\ School of Physical \& Mathematical Sciences\\
Nanyang Technological University\\ 21 Nanyang Link\\ Singapore\\
Email: selwyn.km.ng@gmail.com}

\author{Andr\'e Nies}
\address{Department of Computer Science\\ Private Bag 92019 \\
University of Auckland\\
Auckland\\
New Zealand\\
Email: andre@cs.auckland.ac.nz\\}

\let\thefootnote\relax\footnote{J\"org Brendle is supported by Grant-in-Aid for Scientific Research (C) 24540126,
   Japan Society for the Promotion of Science.
Andrew Brooke-Taylor is currently supported by a UK
Engineering and Physical Sciences Research Council Early Career Fellowship,
and was previously supported by a JSPS Postdoctoral Fellowship for Foreign Researchers and JSPS Grant-in-Aid 23 01765.
Keng Meng Ng is supported by the MOE grant MOE2011-T2-1-071. Andr\'e Nies is supported by the Marsden fund of New Zealand. }

\begin{abstract} We present an analogy between  cardinal characteristics from set theory and  highness properties from computability theory, which specify a sense in which   a Turing oracle is computationally strong.
While this analogy was first studied explicitly by Rupprecht  ({\em Effective correspondents to cardinal characteristics in
  {C}icho\'n's diagram},  PhD thesis, University of Michigan, 2010), many  prior  results   can be viewed from  this
perspective.
After a comprehensive survey of the analogy
for characteristics from Cicho\'n's diagram,
we extend it to
  Kurtz randomness and the analogue of the Specker-Eda number.
\end{abstract}
%\keywords{Turing degrees, truth table degrees, algorithmic randomness}
%\maketitle

%%%%%%%%%%%%%%%%%%%%%%%%%%%%%%%%%%%%%%%%%%%%%%%%%%%%%%
%
% Section 1: Introduction
%
%%%%%%%%%%%%%%%%%%%%%%%%%%%%%%%%%%%%%%%%%%%%%%%%%%%%%%

\section{Introduction}

Mathematics studies abstract objects via concepts and corresponding methods. Metamathematics studies these concepts and methods. A common scheme in metamathematics is duality, which can be seen as a bijection between concepts. For instance, Stone duality matches concepts related to Boolean algebras with   concepts related to    totally disconnected Hausdorff spaces. A weaker scheme is analogy, where different areas develop in similar ways. An example is the analogy between meager sets and null sets. While one   can match the  concepts  in one area with the concepts in the other area, the results about them  may differ.

We systematically develop an analogy between \begin{itemize}\item[(a)]  cardinal characteristics from set theory, which broadly speaking measure the deviation from the continuum hypothesis of a particular  model of ZFC
\item[(b)] highness properties from computability theory, which specify a sense in which   a Turing oracle is computationally strong.  \end{itemize}

One of the simplest examples of a cardinal characteristic is the unbounding number.  For functions $f, g$ in Baire space $\Baire$, let $f \le^* g$ denote that $f(n) \le g(n)$ for almost all $n \in \omega$. Given a countable collection  of functions $(f_i)\sN i$ there is $g$ such that $f_i \le^* g$ for each $i$: let $g(n) = \max_{i \le n} f_i(n)$.  How large a collection of functions do we need so that no  upper bound $g$ exists? The unbounding number $\frb$ is the least size of such a collection of functions; clearly $\aleph_0 < \frb \le 2^{\aleph_0}$.

In computability theory,  probably the simplest highness property is the following: an oracle $A$ is called (classically) high  if $\emptyset '' \le_T A'$. Martin \cite{Martin:66} proved that one can require  equivalently the following:   there is a function $f \le_T A$ such that $g \le^* f$ for each computable function $g$.

At the core of the analogy, we will describe a formalism to transform cardinal characteristics into highness properties.   A ZFC provable relation $\kappa \le \lambda$ between cardinal characteristics turns into a containment: the highness property for $\kappa$ implies the one for~$\lambda$.

The analogy  occurred  implicitly in the work of Terwijn and Zambella \cite{Terwijn.Zambella:01}, who showed that being  low for Schnorr tests is equivalent to  being computably traceable. (These are  lowness properties, saying the oracle is close to being computable; we obtain  highness properties by taking   complements.)  This is the computability theoretic analog of a result of Bartoszy\'nski~\cite{Bartoszynski:84} that the cofinality of the null sets (how many null sets does one need to cover all null sets?) equals the domination number for traceability, which we will later on denote $\frd(\in^*)$.  Terwijn and Zambella alluded to some connections with  set theory in their work. However, it was not Bartoszy\'nski's work, but rather   work on rapid filters by Raisonnier~\cite{Raisonnier:84}. Actually,  their proof bears striking similarity to Bartoszy\'nski's; for instance, both proofs use measure-theoretic calculations involving independence.  See also the books Ref.~\citenum{Bartoszynski.Judah:book}, Section~2.3.9, 
and Ref.~\citenum{Nies:book}, Section~8.3.3.

The analogy was first observed and studied explicitly by Rupprecht
\cite{Rup:The, Rupprecht:10}. Let $\add(\calN)$ denote the additivity of the null sets: how many null sets does one need so that their union is not null?  Rupprecht found the computability-theoretic analog of $\add(\calN)$.  He called this highness property ``Schnorr covering''; we prefer to call it ``Schnorr engulfing''. A Schnorr null set is a certain effectively defined kind of null set. An oracle $A$ is Schnorr engulfing if it computes a Schnorr null set that contains all plain Schnorr null sets.  While $\add(\calN)$ can be less than  $\frb$, Rupprecht showed that the Schnorr engulfing  sets are exactly the high sets. Thus, we only have an analogy, not full duality.

%%%%%%%%%%%%%%%%%%%%%%%%%%%%%%%%%%%%%%%%%%%%%%%%%%%%%%
%
%  Section 2: Cardinal Invariants
%
%%%%%%%%%%%%%%%%%%%%%%%%%%%%%%%%%%%%%%%%%%%%%%%%%%%%%%

 \section{Cardinal characteristics and Cicho\'n's diagram} \label{s:Cichon}
%$\frd=\frd(\leq^*)=\min\{|\calF|:\calF\subseteq\Baire\land\forall e\in\Baire
%\exists f\in\calF\forall^\infty n\in\omega(e(n)\leq f(n))\}$
All our cardinal characteristics will be given as the unbounding and  domination numbers of suitable relations.
Let  $R \subseteq  X \times Y$ be a relation between spaces $X,Y$ (such as Baire space) satisfying $\forall x \; \exists y \;
(x R y)$ and $\forall y \; \exists x \; \neg (x R y)$. Let $S = \{  \langle y,x \rangle \in Y \times X \colon \neg xR y\}$.   We write

\[ \frd(R) = \min\{|G|:G\subseteq Y \land \, \forall x \in X \,
\exists y \in  G   \, xR y\}.\]

\[ \frb(R) = \frd (S) =  \min\{|F|:F\subseteq X \land \, \forall y\in Y
\exists x \in F   \, \neg  xR y\}.\]
$\frd(R)$ is called the \emph{domination number} of $R$, and $\frb(R)$ the \emph{unbounding  number}.

If $R$ is a preordering without greatest element, then   ZFC proves $\frb (R) \le \frd(R)$.  To see this,   we show that  any dominating set $G$ as in the definition of $\frd(R)$ is an unbounded set as in the definition of $\frb(R)$. Given  $y$     take
a $z$  such that  $\neg zRy$. Pick
    $x\in G$ with $zRx$. Then  $\neg xRy$.

For example, the  relation  $\le^*$ on $ \Baire \times \Baire$ is a preordering without maximum.  One often writes $\frb $ for $\frb (\le^*)$  and  $\frd $ for $\frd (\le^*)$;   thus  $\frb \le \frd$.
(Another easy exercise is to show that if $R$ is a preordering, then $\frb(R)$ is a regular cardinal.)

\subsection{Null sets and meager sets} \label{ss:null_meager} Let $\calS  \sub \mathcal P(\R)$ be  a collection of ``small'' sets; in particular, 
assume $\calS$ is closed downward under inclusion, each singleton  set is in $\calS$, and  $\R$ is not in $\calS$. We will mainly consider the case when $\calS$ is  the class of  null sets  or the class of  meager sets. For null or meager sets, we can replace $\R$ by Cantor space or Baire space without changing the cardinals.

 The unbounding and the domination number for the subset relation $\sub_\calS$ on $\calS$  are called additivity and cofinality, respectively. They  have special notations:
\begin{eqnarray*} \add (\calS) & = & \frb (\sub_\calS) \\
	\cof (\calS) & = & \frd (\sub_\calS) \end{eqnarray*}
%	Two further characteristics are particular to the fact that  $\calS  \sub \mathcal P(\mathbb R)$:
Let $\in_\calS$ be  the membership relation  on $\R \times \calS$. The unbounding and domination numbers for membership   also have special notations:
	\begin{eqnarray*}
 \non(\calS) &=& \frb(\in_\calS) =  \min\{|U|:U\subseteq\R\land U\notin\calS\} \\
	\cov(\calS) &=& \frd(\in_\calS) =  \min\{|\calF|:\calF \subseteq\calS\land\bigcup\calF=\R\}
\end{eqnarray*}
The   diagram in Fig.\ \ref{Fig:4-diagram} shows the ZFC relationships between these cardinals.  An arrow $\kappa \to \lambda$ means that ZFC proves $\kappa \le \lambda$.
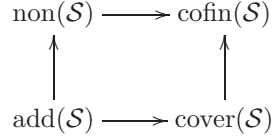
\begin{figure}[htbp]
\begin{equation*} \xymatrix{
 \non(\calS)  \ar[r]&	\cof (\calS)   \\
	\add (\calS) \ar[u]\ar[r]&	\cov (\calS) \ar[u]
} \end{equation*}
\caption{Basic ZFC relations of  characteristics for a class $\calS$.}
\label{Fig:4-diagram}
\end{figure}
The only slightly nontrivial arrow is  $\non(\calS)  \to 	\cof (\calS)$:  Suppose we are given $\calF \sub \calS$ such that  for every $C \in \calS$ there is $D \in \calF$ with $C \sub D$. Using the axiom of choice, for each $D$ pick $u_D \not \in D$.  Now let $V = \{ u_D \colon \, D \in \calF\}$. Then $V \not \in \calS$ and $|V| \le |\calF|$.
(Note that we have used the notations ``$\cov$" and ``$\cof$" instead of the standard         ``cov" and``cof", because the latter two look very much alike.)

\subsection{The combinatorial Cicho\'n diagram}  \label{ss:combCichon}   In a somewhat nonstandard approach to Cicho\'n's diagram, we will  consider the   smaller
  ``combinatorial'' diagram in Fig.\ \ref{Fig:6-diagram} which describes the ZFC relations between   the      cardinal characteristics $\frd(R)$ and $\frb(R)$ for three relations $R$. The first relation is $\le^*$.
The second  relation   is   $$\{  \langle f, g \rangle \in \Baire \times \Baire \colon \, \forall^\infty n  \, f(n) \neq g(n)\},$$ which we will denote by~$\neq^* $.  For instance, we have
\[ \frd(\neq^*)
=\min\{|F|:F\subseteq\Baire\land\forall e\in\Baire
\exists f\in F\forall^\infty n\in\omega(e(n)\neq f(n))\} \]

For the third relation, let $Y$ be the space of functions from $\omega$ to the set of finite subsets of $\omega$. Recall that  $\sigma \in Y$ is a \emph{slalom} if $|\sigma(n)| \le n$ for each $n$.   We say that  a function $f \in \Baire$ is \emph{traced} by $\sigma$ if $f(n) \in \sigma(n)$ for almost every $n$. We denote this relation on $\Baire \times Y$ by $\in^*$.
We have for example
\[
\frb(\in^*)
=\min\{|F|:F\subseteq\Baire\land\forall\text{ slalom }\sigma
\exists f\in F\exists^\infty n\in\omega(f(n)\notin \sigma(n))\}.
\]

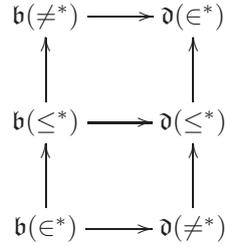
\begin{figure}[htbp]
	\begin{equation*}
	\xymatrix{
	\frb(\neq^*) \ar[r]&\frd(\in^*)\\
	\frb(\le^*)\ar[u]\ar[r]&\frd(\le^*)\ar[u] \\
	\frb(\in^*)\ar[u]\ar[r]&\frd(\neq^*)\ar[u]
	}
	\end{equation*}
\caption{Combinatorial ZFC relations.}
\label{Fig:6-diagram}
\end{figure}

The nontrivial arrows such as $\frb(\in^*) \to \frd(\neq^*)$ follow from the full diagram discussed next.

\subsection{The  full Cicho\'n diagram}

We  are now ready to present (a slight extension of) the usual Cicho\'n diagram. As a first step, in Fig.~\ref{Fig:4-diagram} we take the equivalent diagram for $\calS= \calN$ where $\cov(\calN)$ and $\non(\calN)$ have been interchanged. We join it with the diagram for  $\calS= \calM$ and obtain the diagram in Fig.~\ref{Fig:8-diagram}.
\begin{figure}[htbp]
\begin{equation*}
\xymatrix{
\cov(\calN)\ar[r]&\non(\calM)\ar[r]&\cof(\calM)\ar[r]&\cof(\calN)\\
\add(\calN)\ar[u]\ar[r]&\add(\calM)\ar[u]\ar[r]&\cov(\calM)\ar[u]\ar[r]&
\non(\calN)\ar[u]
}
\end{equation*}
\caption{The diagrams for $\calN $ and $\calM$ joined.}
\label{Fig:8-diagram}
\end{figure}
The new arrows joining the two 4-element diagrams, such as $\cof(\calM) \to \cof(\calN)$, are due to Rothberger and Bartoszy\'nski; see
Ref.~\citenum{Bartoszynski.Judah:book} Sections~2.1.7 and 2.3.1 for details.

  Finally in Fig.\ \ref{Fig:10-diagram} we superimpose  this 8-element diagram with  the combinatorial 6-element diagram in Fig.\ \ref{Fig:6-diagram}.  For all its  elements except  $\frb$ and $\frd$,  ZFC    proves  equality  with  one  of the characteristics in the 8-element diagram.   These
four  ZFC equalities are due to Bartoszy\'nski~\cite{Bartoszynski:84,Bartoszynski:87} and Miller~\cite{Miller:82}. Below we will mainly rely on the  book Ref.~\citenum{Bartoszynski.Judah:book} Sections~2.3 and 2.4.
\begin{figure}[htbp]
\begin{equation*}
\xymatrix{
&\frb(\neq^*)\ar@{=}^{\text{Ref.~\citenum{Bartoszynski.Judah:book} Th.~2.4.7}}[d]&&\frd(\in^*)\ar@{=}^{\text{Ref.~\citenum{Bartoszynski.Judah:book} Th.~2.3.9}}[d]\\
\cov(\calN)\ar[r]&\non(\calM)\ar[r]&\cof(\calM)\ar[r]&\cof(\calN)\\
&\frb\ar[u]\ar[r]&\frd\ar[u]&\\
\add(\calN)\ar[uu]\ar [r]&\add(\calM)\ar[u]\ar[r]&\cov(\calM)\ar[u]\ar[r]&
\non(\calN)\ar[uu]\\
\frb(\in^*)\ar@{=}^{\text{Ref.~\citenum{Bartoszynski.Judah:book} Th.~2.3.9}}[u]&&\frd(\neq^*)\ar@{=}^{\text{Ref.~\citenum{Bartoszynski.Judah:book} Th.~2.4.1}}[u]
}
\end{equation*}
\caption{Cicho\'n's  diagram.}
\label{Fig:10-diagram}
\end{figure}
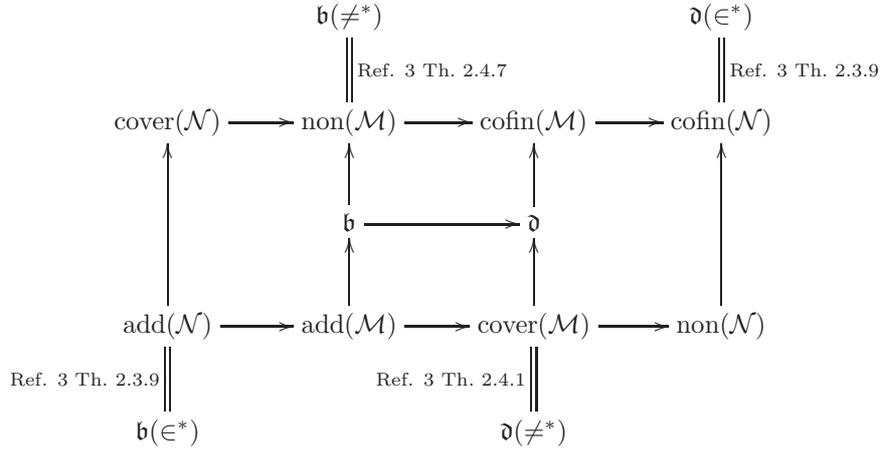

Fig.\ \ref{Fig:10-diagram}
shows \emph{all}   ZFC-provable binary relationships: for any two cardinal
characteristics $\frv$ and $\frw$ appearing in the diagram, if there is no arrow
$\frv\to \frw$, then there is a known model of ZFC in which $\frv>\frw$. See  Subsection~\ref{ss:separate} for more discussion related to this.

Two  ZFC-provable ternary relationships  will be
of interest to us:
\begin{align*}
\add(\calM)&=\min(\frb,\cov(\calM)),  & \text{Miller, Truss;  see Ref.~\citenum{Bartoszynski.Judah:book} Section~2.2.9}\\
\cof(\calM)&=\max(\frd,\non(\calM))  & \text{Fremlin; see Ref.~\citenum{Bartoszynski.Judah:book} Section~2.2.11}.\\
\end{align*}
%The consistency of other three-or-more-way configurations of the cardinals in
%Cicho\'n's diagram is a topic of ongoing research.    MEANS?

%%%%%%%%%%%%%%%%%%%%%%%%%%%%%%%%%%%%%%%%%%%%%%%%%%%%%%
%
%  Section 3: Highness properties
%
%%%%%%%%%%%%%%%%%%%%%%%%%%%%%%%%%%%%%%%%%%%%%%%%%%%%%%

\section{Highness properties corresponding to the cardinal characteristics}

\label{s:highness_props}
We now present a scheme to describe highness properties of oracles which is analogous to the one in Section~\ref{s:Cichon}. As before, let  $R \subseteq  X \times Y$ be a relation between spaces $X,Y$, and  let $S = \{  \langle y,x \rangle \in Y \times X \colon \neg xR y\}$.
Suppose we have specified what it means for     objects $x$ in $X$,    $y$ in  $Y$  to be computable in a Turing oracle $A$. We denote this by for example $x\leT A$. In particular, for $A= \emptyset$ we have a notion of computable objects.
For instance, if $X$ is Baire space and $f\in X$, we have the usual notion $f \le_T A$.

Let the variable $x$ range over $X$, and let $y$ range over $Y$. We define the highness properties

\[ \calB(R) =   \{ A:  \,\exists y \leT A \, \fa x \ttext{computable} [xRy]     \} \]

\[ \calD(R) =  \calB(S) =   \{ A:  \,\exists x \leT A \, \fa y \ttext{computable} [\neg xRy]   \}  \]
If $R$ is a preordering with no greatest  computable element, then clearly  $\calB (R) \sub \calD(R)$. We will give some examples of such preorderings in Subsection~\ref{ss:comb_rel}.

Comparing these definitions with the ones of $\frd(R)$ and $\frb(R)$ at the beginning of Section~\ref{s:Cichon},
one notes that, ignoring the domains of quantification,
we use here direct analogs of the
\emph{negations} of the statements there.
For example, the formula defining $\frb(\leq^*)$ is of the form
$\forall y \exists x\,\lnot(x\leq^* y)$, and
the defining formula for $\calB(\leq^*)$ takes the form of its negation,
$\exists y \forall x (x\leq^* y)$.
The main reason for doing this is the connection to forcing.
 The cardinal characteristics
we consider are each defined as
\(
\min\{|\calF|:\phi(\calF)\}
\)
for some property $\phi$, where $\calF$ is a set of
functions $\omega\to\omega$, or meager sets, or Lebesgue null sets.
In each case, there is a forcing that introduces via a generic object a function
(or meager set, or null set) such that in the extension model,
$\phi$ no longer holds of the set of all ground model functions
(respectively, meager sets, null sets).
In the $\frb$ ($=\frb(\leq^*)$) case, for instance, after adding a function $y_0$ that dominates all functions
from the ground model, the defining formula $\forall y \; \exists x \; \neg (x \leq^* y)$ no longer holds for the
ground model functions~$x$, as witnessed by $y = y_0$. Thus, iterating this procedure increases the value of $\frb$.
%This defining property of the generic
%with respect to the set of ground model
%functions, meager sets, or null sets is then directly analogous to a
%recursion-theoretic property of a Turing degree with respect to the set
%of computable functions, computably meager sets, or
%Schnorr null sets respectively (see below).
Building a generic object is analogous to building an oracle that is computationally powerful in the sense specified by the analog of $\phi$.

A notational  advantage of taking the negations is that, as the analog of $\frb$, we obtain  classical highness, rather than  the lowness property of being non-high.

%%%%%%%%%%%%%%%%%%%
\subsection{Schnorr null sets and effectively meager sets}  \label{ss:Schnorr meager}   We   find effective versions of null and meager sets. For null sets, instead of in  $\R$ we will   work in Cantor space $\Cantor$. Let $\lambda$ denote the usual product measure on $\Cantor$. For meager sets we will work in Cantor space,  or sometimes  in Baire space $\Baire$.

A {\em Schnorr  test} is an effective sequence $(G_m)\sN m$ of  $\Sigma^0_1$ sets such that each $G_m$ has measure $\lambda G_m$ less than or equal to $2^{-m}$ and this measure is a computable real uniformly in $m$.  A set   $\calF \sub \Cantor$ is called \emph{Schnorr  null} if  $\calF \sub \bigcap_m G_m$ for a Schnorr test $(G_m)\sN m$.

An {\em effective $F_\sigma$ class} has the form $  \bigcup_m \+ C_m$, where the $\+ C_m$ are uniformly $\Pi^0_1$. A set   $\calF \sub \Cantor$ is called \emph{effectively meager}    if it is contained in such a class $  \bigcup_m \+ C_m$ where  each $\+ C_m$ is nowhere dense. (In this case, from $m$ and a string $\sigma$ we can compute a string $\rho \succeq \sigma$ with $[\rho] \cap \+ C_m= \emptyset$. Informally, for a $\Pi^0_1$ class,  being nowhere dense is effective by nature.)

We now obtain  $4+4$  highness properties according to the relations specified in Subsection~\ref{ss:null_meager}.

\subsubsection{Effectively meager sets} \label{ss:EM}

\begin{equation*}
\xymatrix{
 \calB(\in_\calM)  \ar^{\text{(3)}}[r]&	\calD (\sub_\calM)  \\
	\calB (\sub_\calM) \ar^{\text{(2)}}[u]\ar^{\text{(1)}}[r]&\calD(\in_\calM) \ar^{\text{(4)}}[u]
}
\end{equation*}

We clarify   the  meaning    of   these  properties  of an oracle $A$, and introduce some terminology or match them with known notions in computability theory.

\begin{itemize} \item
$\calB (\sub_\calM)$:     there is an $A$-effectively meager set $\calS$ that includes  all effectively meager sets.   Such an   oracle $A$ will be called \emph{meager engulfing}.

\item
$\calB(\in_\calM)$:  there is an $A$-effectively meager set that contains  all computable reals. Such an   oracle $A$   will be called  \emph{weakly meager engulfing}. Note that the notion of (weakly) meager engulfing is the same in the Cantor space and in the Baire space.

\item
$\calD (\sub_\calM) $:  there is an $A$-effectively meager set not included   in any effectively meager set. It is easy to see  that this is the same as saying that $A$ is not low for weak 1-genericity (see Ref.~\citenum{Stephan.Yu:nd} Theorem~3.1).

\item
$\calD(\in_\calM)$: there is $f \leT A$ such that $f \not \in \calF$ for each effectively meager $\calF$. This says that $A$ computes a weakly 1-generic.  \end{itemize}

An arrow now means containment. The various arrows can be  checked easily.
\begin{itemize} \item[(1)]  Given an $A$-effectively meager set $\calS \sub \Cantor$, by finite extensions  build $f \in \Cantor$, $f \leT A$ such that $f \not \in \calS$. If $\calS$ includes all  effectively meager sets,  then  $f$ is not in  any nowhere dense $\Pi^0_1$ class, so $f$ is weakly 1-generic.
	\item[(2)] Trivial.
	
	\item[(3)]  Let $\calF$ be an  $A$-effectively meager set containing all computable reals. If $\calF $ is included in an effectively meager set $\calG$, then choose a computable $P \not \in \calG$ for a contradiction.
	\item[(4)] Trivial.
\end{itemize}

\subsubsection{Schnorr null sets}
\label{ss:ES}

In order to join diagrams later on, for measure we work with the equivalent flipped diagram from  Subsection~\ref{ss:null_meager}  where the left upper and right lower corner have been exchanged.
\begin{equation*}
\xymatrix{
 \calD(\in_\calN)  \ar[r]&	\calD (\sub_\calN)  \\
	\calB (\sub_\calN) \ar[u]\ar[r]&\calB(\in_\calN) \ar[u]
}
\end{equation*}

\begin{itemize} \item
$\calB (\sub_\calN)$:     there is a  Schnorr null  in $A$ set that includes  all Schnorr null sets.   Such an   oracle $A$ will be called \emph{Schnorr engulfing}.  (This was     called ``Schnorr covering'' by Rupprecht \cite{Rup:The, Rupprecht:10}.  We have changed his terminology because the computability theoretic class is not the analog of a   cardinal characteristic of type $\cov(\+ C)$.)

\item
$\calB(\in_\calN)$:  there is a Schnorr null  in $A$ set that contains  all computable reals. Such an   oracle $A$   will be called  \emph{weakly Schnorr engulfing}.

\item
$\calD (\sub_\calN) $:  there is a Schnorr null  in $A$ set not contained in any Schnorr null sets. One says  that $A$ is not low for Schnorr tests following Ref.~\citenum{Terwijn.Zambella:01}.

\item
$\calD(\in_\calN)$: there is $x \leT A$ such that $x \not \in \calF$ for each Schnorr null $\calF$. This says that $A$ computes a Schnorr random.
\end{itemize}
As before, the arrows are easily verified. One uses the well-known fact that each Schnorr null set fails to contain some computable real; see for example  Ref.~\citenum{Nies:book} Ex.~1.9.21 and the solution.	 This was already observed by Rupprecht~\cite{Rupprecht:10}.

\subsection{Combinatorial relations} \label{ss:comb_rel}
Let us see which highness properties we obtain for the three relations in Subsection~\ref{ss:combCichon}.

\begin{itemize}  \item If $R$ is $\le^*$, then $\calB(R)$ is highness, and $\calD(R)$ says that an oracle $A$  is of hyperimmune degree.
%\end{itemize}
%\begin{itemize}
\item
Let $R$ be $\neq^*$.  The property  $\calB(\neq^*)$ says that there is  $f \leT A $ such that  $f$   eventually disagrees  with  each computable function.
Recall that a set   $A$ is called {\em DNR} if it computes a function $g$
that is \emph{diagonally nonrecursive}, i.e.,
there is no $e$ such that the $e$th partial computable function converges
on input $e$ with output $g(e)$
(this is also referred to as diagonally noncomputable or d.n.c., for example
in Ref.~\citenum{Nies:book}). $A$ is called {\em PA} if it computes a $\{ 0,1 \}$-valued diagonally nonrecursive function.
The property $\calB(\neq^*)$ is equivalent to
  ``high or DNR'' by Theorem~5.1 of Ref.~\citenum{Kjos.ea:2011}.

The property  $\calD(\neq^*)$ says that there is  $f \leT A $ such that  $f$   agrees infinitely often   with  each computable function.

\item
Let $R$ be $\in^*$. Slaloms are usually called \emph{traces} in computability theory. Recall that $D_n$ is the $n$-th finite set. We say that a trace $\sigma$ is \emph{computable in $A$}  if there is a  function $p \leT A$ such that $\sigma(n) = D_{p(n)}$.   Now $\calB(\in^*)$ says that $A$ computes a trace that traces all computable functions. By Theorem 6 of Ref.~\citenum{Rupprecht:10} this is equivalent to highness.

 The property $\calD(\in^*)$ says that there is a function $f \leT A$ such that, for each computable trace $\sigma$, $f$ is not traced by $\sigma$. This means that $A$ is not computably traceable in the sense of Ref.~\citenum{Terwijn.Zambella:01}.
\end{itemize}

%%%%%%%%%%%%%%%%%%%%%%%%%%%%%%%%%%%%%%%%%%%%%%%%%%%%%%
%
%  Section 4: Main results
%
%%%%%%%%%%%%%%%%%%%%%%%%%%%%%%%%%%%%%%%%%%%%%%%%%%%%%%

\section{The full  diagram in computability theory}

We now present the full analog of Cicho\'n's diagram.
Note that in Theorem~IV.7 of his thesis \cite{Rup:The} Rupprecht also gave this diagram
for the standard part of Cicho\'n's diagram,
without the analogs of $\frb(\in^*)$, $\frd(\in^*)$, $\frb(\neq^*)$, and
$\frd(\neq^*)$ explicitly mentioned. Most of the equivalences between the analog of the standard Cicho\'n diagram and its full  form are implicit in the literature; see Fig.\ \ref{Fig:computability_diagram}.

The bijection between  concepts in set theory and in computability theory is obtained as follows. In Section~\ref{s:highness_props} we have already specified effective versions of each of the relations $R$ introduced in Section~\ref{s:Cichon}. In Cicho\'n's diagram of Fig.\ \ref{Fig:10-diagram}, express each characteristic in the original form   $\frb(R)$ or $\frd(R)$, and  replace it by $\calB(R)$ or $\calD(R)$, respectively.  Replacing most of these notations by their meanings defined and explained  in Section~\ref{s:highness_props}, we obtain the diagram of Fig.\ \ref{Fig:computability_diagram}.

\begin{figure}[htbp]
\begin{equation*}
\xymatrix{
&  \text{high or DNR} \lra \calB(\neq^*)
\ar^{\parbox{1cm}{\scriptsize \text{Ref.~\citenum{Rup:The}}\\ \text{(Thm. \ref{thm:WME})}}}@{=}[d]
&&\parbox{2.5cm}{\center $\calD(\in^*) \lra \  $not\\ computably traceable}\ar^{\text{Ref.~\citenum{Terwijn.Zambella:01}}}@{=}[d]\\
\parbox{1.8cm}{\center $A\ge_Ta$  Schnorr random}\ar^{\text{Ref.~\citenum{Nies.Stephan.ea:05}}}[r]&
\parbox{2cm}{\center{weakly meager engulfing}}\ar[r]&
\parbox{2cm}{\center not low for weak 1-gen  \\  (i.e. hyperimmune or DNR\cite{Stephan.Yu:nd})}\ar^{\text{Ref.~\citenum{Kjos.ea:2005}}}[r]&
\parbox{1.8cm}{\center not low for Schnorr tests}\\
&\text{high}\ar[u]\ar[r]&
\parbox{2.3cm}{\center hyperimmune degree}\ar[u(0.45)]&\\
\parbox{1.3cm}{\center Schnorr engulfing} \ar[uu(0.85)]\ar@{=}[r]
%_{\text{Thm. \ref{thm:highmeager} and \cite{Rupprecht:10}}}
\ar^{\text{Ref.~\citenum{Rupprecht:10}}}@{=}[ur]&
\parbox{1.3cm}{\center meager engulfing}\ar_{
\parbox{1cm}{\scriptsize \text{Ref.~\citenum{Rup:The}}\\ \text{(Thm. \ref{thm:highmeager})}}}@{=}[u]
\ar[r]&
\parbox{1.4cm}{\center weakly\\ 1-generic degree}\ar^{\text{Ref.~\citenum{Kurtz:81}}}@{=}[u]\ar^{\text{Ref.~\citenum{Rupprecht:10}}}[r]&
\parbox{1.3cm}{\center weakly Schnorr engulfing}\ar[uu(0.85)]\\
 \calB(\in^*)
\ar^{\text{Ref.~\citenum{Rupprecht:10}}}@{=}[u]&&
 \calD(\neq^*)\ar^{\text{Prop. \ref{prop:W1G}}}@{=}[u]
}
\end{equation*}
\caption{The analog  of Cicho\'n's  diagram in computability.}
\label{Fig:computability_diagram}
\end{figure}
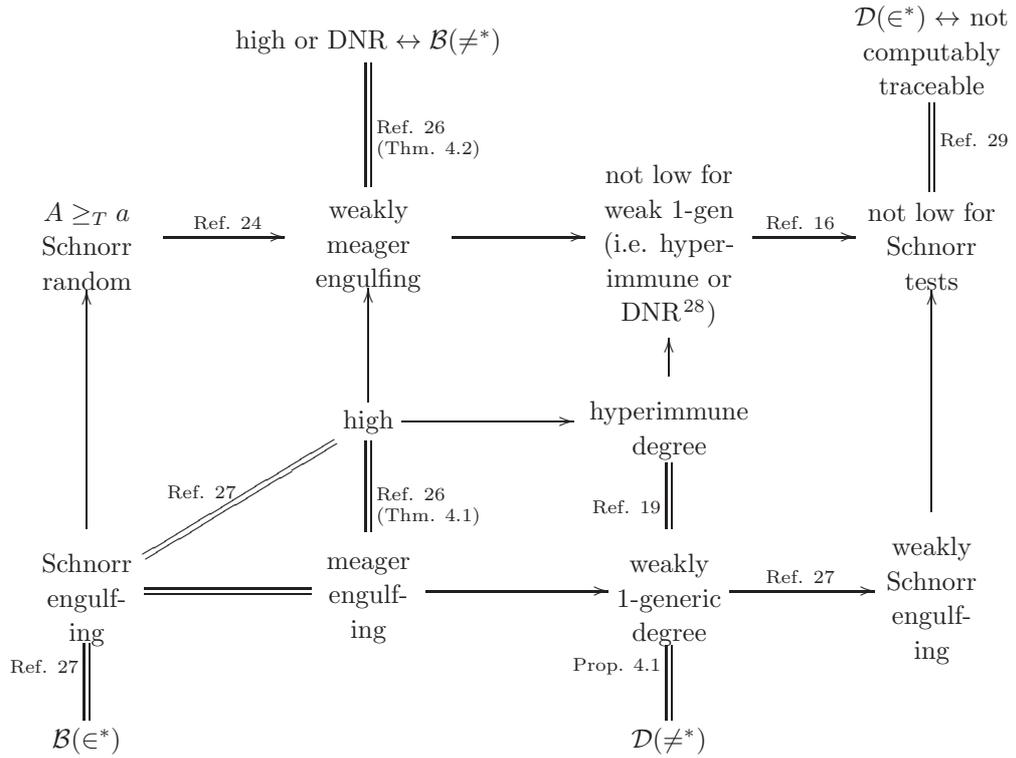

Note that there is a lot of collapsing: instead of ten distinct  nodes, we have only seven. Recall that we rely on a  specific  way of effectivizing the relations $R$.   T.\ Kihara has raised the question whether, if one instead  chooses an effectivization via higher computability theory,   there is less  collapsing. He has announced that in the hyperarithmetical  case, the analogs of $\frd$ and $\cov(\+ M)$ do not coincide.

As  the analog of  $\cof(\calM)=\max(\frd,\non(\calM))$, we have that

\begin{center}   not low for weak 1-genericity   $=$ weakly meager engulfing $\cup$ h.i.\ degree. \end{center}  This is so by the degree theoretic characterizations and because highness  implies being of  hyperimmune degree.   The analog of the dual  ternary relation    $\add(\calM)=\min(\frb,\cov(\calM))$   is trivial because of the collapsing.

%%%%%%%%%%%   subsection 4.1

\subsection{Implications}

We verify the arrows and equalities in the computability version of Cicho\'n's diagram Fig.\ \ref{Fig:computability_diagram} in case they 
are not the trivial  arrows from Subsections~\ref{ss:EM} and \ref{ss:ES},
and they have not been referenced in the diagram or have only appeared in Rupprecht's thesis (Ref.~\citenum{Rup:The}). This only leaves   highness properties relating to meagerness.  

As indicated in the diagram, Kurtz~\cite{Kurtz:81} has shown that the weakly 1-generic and hyperimmune degrees coincide (also see Ref.~\citenum{Downey.Hirschfeldt:book} Corollary~2.24.19).
\begin{proposition}\label{prop:W1G}  $A$ is in $\calD(\neq^*)$  $\LR$   $A$ has weakly 1-generic degree.
\end{proposition}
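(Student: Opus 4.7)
The plan is to prove both directions of the equivalence, paralleling Bartoszy\'nski's ZFC argument that $\frd(\neq^*) = \cov(\calM)$.

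For $(\Rightarrow)$, I would take $f \leT A$ witnessing $A \in \calD(\neq^*)$: for every computable $g \in \Baire$, $f$ agrees with $g$ at infinitely many positions. I claim $f$ is not dominated by any computable function. Indeed, if a computable $h$ satisfied $f(n) \le h(n)$ for almost every $n$, then the computable $g$ given by $g(n) = h(n) + 1$ would satisfy $f(n) < g(n)$, hence $f(n) \neq g(n)$, for almost every $n$, contradicting the witness property of $f$. Thus $A$ computes a function not dominated by any computable one, so $A$ has hyperimmune degree, which by Kurtz's theorem (Ref.~\citenum{Kurtz:81}, already cited in the diagram) coincides with weakly 1-generic degree.

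For $(\Leftarrow)$, suppose $A$ has weakly 1-generic degree and pick $F \leT A$ weakly 1-generic in $\Baire$. For each computable $g \in \Baire$ and each $k \in \omega$, consider
\[
D_{g,k} \;=\; \bigl\{\sigma \in \omega^{<\omega} : \bigl|\{n < |\sigma| : \sigma(n) = g(n)\}\bigr| \ge k\bigr\}.
\]
This set is computable and dense: any $\tau$ extends to the string obtained by appending $g(|\tau|), g(|\tau|+1), \dots, g(|\tau|+k-1)$ as the next $k$ values, which lies in $D_{g,k}$. By weak 1-genericity, some initial segment of $F$ lies in $D_{g,k}$, so $F$ agrees with $g$ on at least $k$ positions. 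Since $k$ was arbitrary, $F$ and $g$ agree at infinitely many positions, so $F$ witnesses $A \in \calD(\neq^*)$.

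The only delicate point is selecting the weakly 1-generic in $A$'s degree as an element of $\Baire$ rather than $\Cantor$. This is a routine adaptation of Kurtz's Cantor-space construction: given a hyperimmune function $\leT A$, one builds $F \leT A$ by finite extensions that meet each dense c.e.\ set of strings in $\omega^{<\omega}$, using the hyperimmune function to pace the search. Granted this, the argument is clean: the forward direction is the trivial observation ``dominated $\Rightarrow$ disagrees eventually with the successor of the dominator,'' and the backward direction simply repackages weak 1-genericity via the dense sets $D_{g,k}$ as infinitely-often agreement with every computable function.
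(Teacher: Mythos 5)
Your forward direction is correct and is essentially the paper's: a function infinitely often equal to every computable function is not computably dominated (the paper uses $f+1$, you use the dominator plus one), so $A$ has hyperimmune degree, and Kurtz's theorem converts this to weakly 1-generic degree.

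The backward direction is where your write-up has a real gap. Your $D_{g,k}$ argument is fine, but it needs a weakly 1-generic element of \emph{Baire space} computable from $A$, and the hypothesis only gives a weakly 1-generic element of Cantor space in the degree of $A$ (a $\{0,1\}$-valued function is never Baire-space weakly 1-generic, since it misses the dense set of strings taking a value $\ge 2$). You defer this to ``a routine adaptation of Kurtz's Cantor-space construction,'' but that is precisely where the technical content of your proof lives, and it is not covered by the cited result. The adaptation is not immediate: in the finite-extension construction paced by a non-dominated $p\leT A$, the time needed to find an extension of the current string into a dense c.e.\ set depends on the current string, which in turn depends on $p$, so one cannot directly play non-domination against it; one must keep the length \emph{and}, in Baire space, the entries of the stage-$s$ approximation computably bounded in $s$ (e.g.\ by copying targets one symbol per stage with delays), so that the relevant modulus-of-density function is genuinely computable. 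The claim you need is true, but as written it is an unproved lemma, not a citation.

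There are two easy repairs, and comparing them with the paper is instructive. The paper avoids Baire-space genericity altogether: from the Cantor-space weakly 1-generic $A$ it defines $f(n)=\min\{i: n+i\in A\}\leT A$ and, for a computable $h$, meets the single dense computable set $V=\{\sigma*0^{h(|\sigma|)}*1\}$, which forces $f(|\sigma|)=h(|\sigma|)$; one agreement with every computable function already yields infinitely many (modify $h$ at the finitely many agreement points, or use the obvious variants of $V$). Alternatively, if you want your $D_{g,k}$ argument verbatim, transfer the generic by coding rather than by a new construction: write the Cantor-space weakly 1-generic $G$ as $0^{a_0}1\,0^{a_1}1\cdots$ and set $F(n)=a_n$; pulling a dense c.e.\ $W\subseteq\omega^{<\omega}$ back to the dense c.e.\ set of binary strings $0^{b_0}1\cdots0^{b_{k-1}}1$ with $(b_0,\dots,b_{k-1})\in W$ shows $F\leT G$ is Baire-space weakly 1-generic. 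Either patch closes the gap; your current appeal to a paced Kurtz-style construction does not, without carrying out the construction.
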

\begin{proof}
($\Rightarrow$): Suppose  $f\leT A$   infinitely often  agrees with  each computable function. Then $f+1$ is not dominated by any computable function,   so $A$ is of hyperimmune degree.

($\Leftarrow$): Suppose that $A$ is weakly 1-generic. Let $f(n)= $ least $i\geq 0$ such that $n+i\in A$. Let $h$ be a total computable function. It is enough to argue that $f(x)=h(x)$ for some $x$. We let $V=\{\sigma*0^{h\left(|\sigma|\right)}*1\mid\sigma\in2^{<\omega},|\sigma|>0\}$. Clearly $V$ is a dense computable set of strings. Let $\sigma$ be such that $A\supset \sigma*0^{h(|\sigma|)}*1$. Then $f(|\sigma|)=h(|\sigma|)$.
\end{proof}

Rupprecht (Ref.~\citenum{Rup:The} Corollary~V.46) showed that meager engulfing is
equivalent to high.  Our proof below goes by way of a further
intermediate characterization.

\begin{theorem}\label{thm:highmeager} The following are equivalent for an oracle $A$.
\begin{itemize}
\item[(i)] $A$ is high.
\item[(ii)] There is an effective relative to $A$  sequence $\{G_k\}$ of nowhere dense $\Pi^0_1(A)$-classes such that each nowhere dense $\Pi^0_1$ class  equals  some $G_k$.
\item[(iii)] $A$ is meager engulfing. \end{itemize}
\end{theorem}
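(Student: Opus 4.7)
The plan is to prove the cycle (i) $\Rightarrow$ (ii) $\Rightarrow$ (iii) $\Rightarrow$ (i). The last implication is the Rupprecht theorem cited as Cor.~V.46 of Ref.~\citenum{Rup:The}; the middle one is essentially one line about taking unions, so all of the new content sits in (i) $\Rightarrow$ (ii). The idea there is to use Martin's characterization of highness---an $F\leT A$ dominating every total computable function---to convert the $\Pi^0_2$ property ``$P_e$ is nowhere dense'' into a uniform $\Pi^0_1(A)$ template.

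For (i) $\Rightarrow$ (ii), fix such a dominating $F$, fix an effective enumeration $(P_e)_{e\in\omega}$ of the $\Pi^0_1$ classes in $\Cantor$ with canonical stage-$s$ approximations $P_{e,s}$, and for each pair $(e,k)\in\omega^2$ put
\[
G_{e,k}\;=\;P_e\cap\bigl\{x\in\Cantor:\forall n\,\exists\tau\succeq x\!\restriction\!n,\ |\tau|\le F(n+k),\ [\tau]\cap P_{e,F(n+k)}=\emptyset\bigr\}.
\]
This is uniformly $\Pi^0_1(A)$ because $F\leT A$, and I claim the family $(G_{e,k})_{e,k}$ witnesses (ii). First, each $G_{e,k}$ is nowhere dense: a hypothetical inclusion $[\sigma]\subseteq G_{e,k}$ would force $[\sigma]\subseteq P_e$, so that every $\tau\succeq\sigma$ stays in the approximations forever, killing the existential at $n=|\sigma|$ and contradicting the assumption. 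Second, if $P_e$ is nowhere dense then the witness function $t_e(n)=\min\{s:\forall\sigma\in 2^n\,\exists\tau\succeq\sigma,\ |\tau|\le s,\ [\tau]\cap P_{e,s}=\emptyset\}$ is total computable and monotone in $n$; choosing $k$ so that $F(m)\ge t_e(m)$ for all $m\ge k$ gives $F(n+k)\ge t_e(n)$ for every $n$ by monotonicity, and combined with the fact that $P_{e,s}$ is decreasing in $s$ this forces $P_e\subseteq G_{e,k}$. The reverse inclusion is built into the definition, so $G_{e,k}=P_e$ as required.

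For (ii) $\Rightarrow$ (iii), take $\calS=\bigcup_k G_k$, which is $A$-effectively meager by construction; any effectively meager $\calF\subseteq\bigcup_m\calC_m$ has each $\calC_m$ equal to some $G_{k_m}$ by hypothesis, hence $\calF\subseteq\calS$, so $A$ is meager engulfing. Finally (iii) $\Rightarrow$ (i) is Rupprecht's theorem.

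The main obstacle is the design of $G_{e,k}$: a single $\Pi^0_1(A)$ object must be \emph{always} nowhere dense, so that the list consists of legal entries even for indices $e$ whose $P_e$ has nonempty interior, while still pinning down $P_e$ on the nose when $P_e$ happens to be nowhere dense. The parameter $k$ is exactly what converts Martin's eventual domination $F\ge^* t_e$ into the pointwise bound $F(n+k)\ge t_e(n)$ needed to force $P_e\subseteq G_{e,k}$.
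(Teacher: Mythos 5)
Your implications (i)~$\Rightarrow$~(ii) and (ii)~$\Rightarrow$~(iii) are correct. The (ii)~$\Rightarrow$~(iii) step is exactly the paper's remark that one may replace ``effectively meager'' by ``nowhere dense $\Pi^0_1$'' on the plain side. Your (i)~$\Rightarrow$~(ii) is a mild variant of the paper's construction: the paper defines, from a dominant $f\leT A$, partial computable witness functions $h_i$ (total iff $P_i$ is nowhere dense) and sets $F_{i,n}$ equal to $P_i$ unless a $\Sigma^0_1(A)$ failure event $(\exists x>n)(h_i(x)\uparrow\vee h_i(x)>f(x))$ occurs, in which case $F_{i,n}=\emptyset$; you instead intersect $P_e$ with a $\Pi^0_1(A)$ ``self-certifying'' condition bounded by $F(n+k)$. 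Both hinge on the same point (domination turns the $\Pi^0_2$ fact ``$P_e$ is nowhere dense'' into a uniformly $\Pi^0_1(A)$ template that is always nowhere dense and recovers $P_e$ exactly when $P_e$ is nowhere dense), and your verification of the two required properties of $G_{e,k}$, including the monotonicity of $t_e$ and the decreasing approximations $P_{e,s}$, is sound.

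The genuine gap is (iii)~$\Rightarrow$~(i). You dispose of it by citing Rupprecht's Corollary~V.46, but that corollary \emph{is} the equivalence of high and meager engulfing --- precisely the content of the theorem you are asked to prove (item (ii) being the only new ingredient). The paper explicitly states that it is giving its own proof of Rupprecht's result via the intermediate characterization, and the bulk of its argument is a direct proof of (iii)~$\Rightarrow$~(i): from an engulfing family $\bigcup_i G_i$ one defines $f(n)$ as the maximum, over $i<n$ and $\sigma\in 2^n$, of the length of the first found $\tau\supseteq\sigma$ with $[\tau]\cap G_i=\emptyset$, so $f\leT A$ is total; if $f$ were not dominant, a computable $h$ with $h(x_n)>f(x_n)$ along an infinite sequence $\{x_n\}$ is used to build a computable nowhere dense $\Pi^0_1$ class $P$ of ``good'' reals (blocks of the form $0^k*1*\tau'*1$ with $|\tau'|$ prescribed by $h$) together with an $A$-computable path $X\in P$ avoiding every $G_i$, contradicting engulfing; hence $f$ is dominant and $A$ is high. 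Without some such argument (or an independent proof of ``meager engulfing implies high'' not resting on the theorem itself), your cycle does not close, so as a standalone proof of the stated theorem the proposal is incomplete.
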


\begin{proof}
(i) $\Rightarrow$ (ii): One says that  $f\in \Baire $ is \emph{dominant} if $g \le^*f$ for each computable function $g$. Suppose $A\geT f$ for some dominant function $f$. Fix an  effective list $P_0,P_1,\cdots$ of all $\Pi^0_1$-classes, and for each $i$ let $h_i(n)$ be  the least stage $s>n$ such that for every $\sigma\in 2^{n}$ there exists some $\tau\succeq\sigma$ such that $[\tau]\cap P_{i,s}=\emptyset$. Then each $h_i$ is partial computable, and $h_i$ is total iff $P_i$ is nowhere dense.

Define the closed set
\[F_{i,n} = \begin{cases} \emptyset, &\mbox{if } (\exists x>n)~(h_i(x)\uparrow ~\vee ~h_i(x) > f(x)),\\
P_i, & \mbox{if } \text{otherwise}. \end{cases}
\]
Note that the predicate ``$(\exists x>n)~(h_i(x)\uparrow ~\vee ~h_i(x) > f(x))$'' is $\Sigma^0_1(A)$ and so $\{F_{i,n}\}$ is an $A$-computable sequence of $\Pi^0_1(A)$-classes, which are all nowhere dense. Now fix $i$ such that $P_i$ is nowhere dense, i.e. $h_i$ is total. Since $f$ is dominant let $n$ be such that $f(x)>h_i(x)$ for every $x>n$. We have $F_{i,n}=P_i$.

(ii) $\Rightarrow$ (iii): For this easy direction, note that an oracle $A$ is meager engulfing iff there exists an $A$-effectively meager $F_\sigma(A)$-class containing all nowhere dense $\Pi^0_1$-classes (i.e., we may replace being effectively meager by being nowhere dense).

(iii) $\Rightarrow$ (i): Suppose $A$ is meager engulfing. Let $\bigcup_i G_i$ be an  $A$-effectively meager $F_\sigma (A)$-class in the sense of Subsection~\ref{ss:Schnorr meager} containing all nowhere dense $\Pi^0_1$-classes. Let $f(n)$ be defined by
\[f(n)=\max_{i<n,\sigma\in ^n 2}\left\{|\tau|\mid \tau\supseteq\sigma\text{ is the first found such that }[\tau]\cap G_{i}=\emptyset\right\}\]
Then $f$ is total (as each $G_i$ is nowhere dense) and $f\leT A$.

We claim that the function $f$ is dominant. Suppose not. Let $h$ be a computable function and let  the increasing  sequence $\{x_n\}$ be such that $h(x_n)>f(x_n)$ for every $n$. We will define a nowhere dense $\Pi^0_1$-class $P$ such that $P\not\subseteq \bigcup_i G_i$. Given string $\tau$ and $n\in\omega$ we say that $\tau$ is $n$-good if $\tau$ is of the form $0^k*1*\tau'*1$ where $k\in\omega$ and $\tau'$ can be any binary string of length $h(n+k+1)$. An infinite binary string $X$ is good if there are strings $\sigma_0,\sigma_1,\cdots$ such that $X=\sigma_0*\sigma_1*\cdots$, $\sigma_0$ is $0$-good, and for each $i$, $\sigma_{i+1}$ is $(|\sigma_0|+|\sigma_1|+\cdots+|\sigma_i|)$-good. Now let $P$ be the set of all infinite binary strings $X$ which are good. Clearly the complement of $P$ is open and is generated by a computable set of basic neighbourhoods. It is also clear that $P$ is nowhere dense, since if $\sigma_0*\cdots*\sigma_i$ is an initial segment of a good path, then the string $\sigma_0*\cdots*\sigma_i*1*\tau'*0$ is not extendible in $P$ for any $\tau'$ of length $h(|\sigma_0|+|\sigma_1|+\cdots+|\sigma_i|+1)$.

%where $X$ is of the form \[0^{n_0}*1*\sigma_0 *1*0^{n_1}*1*\sigma_1*1*0^{n_2}*1*\sigma_2*1*\cdots\]
%where $n_0,n_1,\cdots\in\omega$. For each $i$, $\sigma_i$ is allowed to be any binary string satisfying:
%$|\sigma_0|=h(n_0+1)$ and $|\sigma_{i+1}|=h(|\sigma_i|+2+n_{i+1})$.

Now we use the sequence $\{x_n\}$ to build a path $X\in P$ such that $X\not\in G_i$ for every $i$. (We remark that since the sequence $\{x_n\}$ is $A$-computable, the construction below will produce an $A$-computable path $X$). We inductively define strings $\eta_0,\eta_1,\cdots$ such that $\eta_0$ is $0$-good and $\eta_{i+1}$ is $(|\eta_0|+|\eta_1|+\cdots+|\eta_i|)$-good for every $i$. At the end we take $X=\eta_0*\eta_1*\cdots$ and so $X\in P$. We will also explicitly ensure that $X\not\in G_i$ for any $i$.

\emph{Construction of $X$}. Suppose that $\eta_0,\cdots,\eta_i$ have been defined satisfying the above, so that $\eta_0*\cdots*\eta_i$ is extendible in $P$. Find the least $j$ such that $x_j>|\eta_0|+\cdots+|\eta_i|$, and take $k=x_j-|\eta_0*\cdots*\eta_i|-1\geq 0$. Let $\tau'$ be any string of length strictly equal to $h(x_j)$ such that $\left[\eta_0*\cdots*\eta_i*0^k*1*\tau'\right]\cap G_{i+1}=\emptyset$. This $\tau'$ exists because $h(x_j)>f(x_j)$ and can be found $A$-computably. Now take $\eta_{i+1}=0^k*1*\tau'*1$, which will be $(|\eta_0*\cdots*\eta_i|)$-good.

It is straightforward to check the construction that $X=\eta_0*\eta_1*\cdots$ is good and that for every $i$, $\left[\eta_0*\cdots*\eta_i\right]\cap G_i=\emptyset$. Hence $P$ is a nowhere dense set not contained in $\bigcup_i G_i$, a contradiction.
\end{proof}

\begin{theorem}[Rupprecht, {Ref.~\citenum{Rup:The} Corollary~VI.12}]\label{thm:WME} The following are equivalent for an oracle $A$.
\begin{itemize}
\item[(i)] $A$ is in $\calB(\neq^*)$, that is, there is some $f \leT A$ that eventually disagrees with each computable function.
\item[(ii)] $A$ is weakly meager engulfing. \end{itemize}
\end{theorem}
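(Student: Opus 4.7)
For (i) $\Rightarrow$ (ii), I would take the given $f \leq_T A$ and form, for each $k \in \omega$, the class
\[
D_k = \{h \in \Baire : h(n) \neq f(n) \text{ for all } n \geq k\}.
\]
This family is uniformly $\Pi^0_1(A)$ since the tree $\{\sigma : \sigma(n) \neq f(n) \text{ for all } n \in [k,|\sigma|)\}$ is $A$-decidable, and each $D_k$ is nowhere dense: any $\sigma$ extends to a string $\tau$ of length $\max(k,|\sigma|)+1$ whose last entry equals $f(\max(k,|\sigma|))$, and then $[\tau]\cap D_k = \emptyset$. Any computable $g$ lies in $D_k$ for $k$ past the modulus of eventual disagreement between $f$ and $g$, so $(D_k)_k$ is an $A$-effectively meager cover of the computable reals, witnessing that $A$ is weakly meager engulfing.

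For (ii) $\Rightarrow$ (i) my plan is to invoke the identification $\calB(\neq^*) = $ high $\cup$ DNR (Theorem~5.1 of Ref.~\citenum{Kjos.ea:2011}) and show that WME coincides with this class. The easy inclusion ``high $\cup$ DNR $\subseteq$ WME'' splits into two cases: a high oracle is meager engulfing by Theorem~\ref{thm:highmeager} and hence WME; if $h \leq_T A$ is DNR, then $E_k = \{g \in \Baire : g(k) \neq h(k)\}$ is a uniformly clopen, $A$-computable, nowhere-dense family, and for every total computable $\varphi_e$ the DNR property gives $\varphi_e(e) \neq h(e)$, placing $\varphi_e \in E_e$, so $\bigcup_k E_k$ covers all computables.

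The reverse inclusion WME $\subseteq$ high $\cup$ DNR is the substantive direction. Assuming $A$ is WME with witness $(D_k)$ and is not DNR, I would try to show $A$ is high by constructing a dominant $f \leq_T A$. The starting data are the $A$-computable ``exit functions'' $\phi_k(\sigma)\succeq\sigma$ with $[\phi_k(\sigma)]\cap D_k = \emptyset$, whose lengths encode how rapidly the $(D_k)$ spread to cover the computables. Since non-DNR rules out diagonal covering of computables by $A$-computable methods, the covering must come from the growth rate of these exit functions; aggregating them across $k$ should produce an $A$-computable function dominating every computable, witnessing highness.

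\textbf{The main obstacle} is this extraction of a genuinely dominant function from WME combined with non-DNR. Naively iterating the exit functions yields only some $f \notin \bigcup_k D_k$, which is noncomputable but far from dominant. Closing the family under finite modifications --- adding the $\Pi^0_1(A)$ nowhere-dense sets $D_{k,n,\sigma} = \{h \in \Baire : \sigma \frown h|_{[n,\infty)} \in D_k\}$ for $\sigma \in \omega^n$ --- at best gives an $f$ not eventually equal to any computable function, i.e., $f(n) \neq g(n)$ infinitely often for each computable $g$; this is strictly weaker than the almost-everywhere disagreement that $\calB(\neq^*)$ demands. Leveraging non-DNR to upgrade this i.o.\ disagreement to a dominant function is the technical core of the proof.
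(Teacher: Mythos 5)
Your (i) $\Rightarrow$ (ii) argument is correct and essentially the paper's own: build the uniformly $\Pi^0_1(A)$ nowhere dense sets $\{x : \forall n \geq k\ (x(n) \neq f(n))\}$ in Baire space and observe they cover the computable functions. No issues there.

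For (ii) $\Rightarrow$ (i), however, you have not given a proof but only a plan, and the plan stalls exactly where the real work is. You propose to route through the identification $\calB(\neq^*) = \text{high} \cup \text{DNR}$ and then establish $\text{WME} \subseteq \text{high} \cup \text{DNR}$ by assuming $A$ is WME and not DNR and constructing a dominant $f \leT A$. You candidly flag the gap: the natural manipulations of the ``exit function'' data yield only a function escaping the given meager set, or at best one that is infinitely-often different from every computable function, and you say ``leveraging non-DNR to upgrade this \ldots is the technical core'' without supplying that upgrade. The paper does not attempt this upgrade. It instead proves the contrapositive directly: assume (i) fails (so $A$ is not high and, moreover, every $A$-computable function agrees infinitely often with some computable function), take an arbitrary $A$-effectively meager $V = \bigcup_e V_e$ in Cantor space, and \emph{build a computable real} $\alpha \notin V$. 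The construction defines, for each $n$, an $A$-computable ``escape string'' $\sigma_n$ clearing $V_i \cap [\tau]$ for all $\tau \in 2^n$ and $i \leq n$, uses non-highness to fix a computable $p$ not dominated by $n \mapsto n + |\sigma_n|$, packages blocks of escape data into pairwise-disjoint ``good pairs'' coded by an $A$-computable $f$, and then uses the failure of (i) to extract a computable $h$ agreeing with $f$ infinitely often. The crucial device is that at each stage there are $3n+1$ good pairs to choose from, more than the number of previous commitments, so a disjoint pair can always be selected computably from $h(n)$; assembling the chosen pairs produces a computable $\alpha$ that escapes every $V_e$. This disjointness/counting mechanism, together with the interaction of the two hypotheses (non-highness controlling the growth rate, failure of $\calB(\neq^*)$ supplying the computable $h$ from which to harvest escape strings), is the missing content in your proposal. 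Until that or an equivalent argument is supplied, the (ii) $\Rightarrow$ (i) direction is incomplete.
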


\begin{proof} (i) $\Rightarrow$ (ii): Let $f \leT A$ be given. Then the classes  $\calC_m = \{ x :
(\forall n \geq m) (x(n) \neq f(n)) \}$ are uniformly $\Pi^0_1 (A)$ (in the Baire space) and all $\calC_m$ are nowhere
dense. Thus the set $D = \bigcup_m \calC_m$ is effectively meager and contains all computable functions in $\Baire$.

(ii) $\Rightarrow$ (i): We now work in the Cantor space. Suppose that (i) fails. Let $V=\bigcup_e V_e$ be a meager set relative to $A$. For each $n$ we let $\sigma_n$ be the first string found such that $V_i\cap[\tau*\sigma_n]=\emptyset$ for every $\tau\in 2^n$ and $i\leq n$. Since the degree of $A$ is not high, let $p$ be a strictly increasing computable function not dominated by the function $n+|\sigma_n|\leq_T A$, with $p(0)>0$. Call a pair $(m,\tau)$ good with respect to $n$ if $p^{n}(0)\leq m < p^{n+1}(0)$ and $m+|\tau|<p^{n+2}(0)$. Here $p^0(0)=0$ and $p^{n+1}(0)=p(p^n(0))$. Call two good pairs $(m_0,\tau_0)$ and $(m_1,\tau_1)$ disjoint if $(m_i,\tau_i)$ is good with respect to $n_i$ and $n_0+2\leq n_1$ or $n_1+2\leq n_0$. By the choice of $p$ there are infinitely many numbers $m$ such that $(m,\sigma_m)$ is good and pairwise disjoint from each other.

 Define $f(n)$ to code the natural sequence $(m^n_0, \sigma_{m^n_0}),\cdots,(m^n_{3n},\sigma_{m^n_{3n}})$ such that each pair in the sequence is good and the pairs are pairwise disjoint (from each other and from all previous pairs coded by $f(0),\cdots,f(n-1)$). Let $h$ be a computable function infinitely often equal to $f$. We may assume that each $h(n)$ codes a sequence of the form $(t^n_0, \tau^n_0),\cdots,(t^n_{3n}, \tau^n_{3n})$ where each pair in the sequence is good for some number larger than $n$, and that the pairs in $h(n)$ are pairwise disjoint. (Unfortunately we cannot assume that $(t^n_{i}, \tau^n_{i})$ and $(t^m_{j}, \tau^m_{j})$ are disjoint if $n\neq m$.) This can be checked computably and if $h(n)$ is not of the correct form then certainly $h(n)\neq f(n)$ and in this case we can redefine it in any way we want.

We  define the computable real $\alpha$ by the following. We first pick the pair $(t^0_0,\tau^0_0)$. Assume that we have picked a pair from $h(i)$ for each $i<n$, and assume that the $n$ pairs we picked are pairwise disjoint. From the sequence coded by $h(n)$ there are $3n+1$ pairs to pick from, so we can always find a pair from $h(n)$ which is disjoint from the $n$ pairs previously picked. Now define $\alpha$ to consist of all the pairs we picked, i.e. if $(t,\tau)$ is picked then we define $\alpha\supset(\alpha\upharpoonright{t})*\tau$, and fill in $1$ in all the other positions. This $\alpha$ is computable because for each $i$, $h(i)$ must code pairs which are good for some $n>i$. It is easily  checked that $\alpha$ is not in $\bigcup_e V_e$.
\end{proof}

%%%%%%%   subsection 4.2

\subsection{Allowed cuts of the diagram} \label{ss:separate}

A {\em  cut} of Cicho\'n's diagram is a partition    of the set of nodes into two nonempty sets  $L,R$ such that edges leaving $L$ go to the right or  upward. For any   cut not contradicting the ternary  relationships $\add(\calM) =\min(\frb,\cov(\calM))$ and $\cof(\calM) =\max(\frd,\non(\calM))$, it  is consistent with $ZFC$  to  assign the cardinal $\aleph_1$ to all nodes in $L$, and $\aleph_2$ to all nodes in $R$.
See Sections 7.5 and 7.6 of Ref.~\citenum{Bartoszynski.Judah:book} for all of the models.

%It is well-known that Cicho\'n's diagram is complete in the strong sense that any distribution of the
%cardinals $\aleph_1$ and $\aleph_2$ that does not contradict the diagram, i.e., the arrows as well
%as the two ternary relationships $\add(\calM) =\min(\frb,\cov(\calM))$ and $\cof(\calM) =\max(\frd,\non(\calM))$,
%is consistent with $ZFC$ (see~\cite[Sections 7.5 and 7.6]{Bartoszynski.Judah:book} for all the models).
%

We expect the same to be true for the computability-theoretic diagram: for any allowed cut $L,R$, there is a  degree
satisfying all the properties in $R$, and none in $L$.  There are still several
open questions. Of course, since there are more equivalences on the computability theoretic side, there
are fewer possible combinations.

Here is a list of   possible combinations.

\begin{enumerate}
\item \emph{There is a set $A \geT $ a Schnorr random which is not high yet of hyperimmune degree}.  (This corresponds to the cut where $L$ only contains the property of highness.) The fact  follows by considering a low random real, whose existence is guaranteed by the low basis theorem --- see for example Ref.~\citenum{Nies:book} Theorem~1.8.37.

\item \emph{There is a set  $A \geT $ a Schnorr random which is weakly Schnorr engulfing and of hyperimmune-free degree}. (This corresponds to the cut where $L$   contains the properties of highness and of having hyperimmune degree.)  The fact  follows by taking a set $A$ of hyperimmune-free PA degree (see e.g.~1.8.32 and 1.8.42 of Ref.~\citenum{Nies:book} for the existence of such $A$).
This set is weakly Schnorr engulfing (by Theorem \ref{thm:PAweaklySE} below) and also computes a Schnorr random (by the Scott basis theorem, see e.g. ~Ref.~\citenum{Downey.Hirschfeldt:book}: 2.21.2 or~Ref.~\citenum{Nies:book}: 4.3.2).

Note that Rupprecht shows in Corollary~27 of Ref.~\citenum{Rupprecht:10} that if $B$ is a hyperimmune-free Schnorr
random, then $B$ is not weakly Schnorr engulfing; this shows the example $A$ itself cannot be Schnorr random.
Intuitively, the example $A$ is ``larger than the Schnorr random". One way to view this (from the forcing-theoretic point of view)
is as a two-step iteration: first add a Schnorr random $B$ of hyperimmune-free degree and then a hyperimmune-free
weakly Schnorr engulfing $A$, see e.g.~Ref.~\citenum{Rupprecht:10} Proposition~18 or Theorem~19.

\item \emph{There is a set $A$  which is not weakly Schnorr engulfing  and computes a Schnorr random}. This follows by taking $A$ to be
Schnorr random of hyperimmune-free degree (which is possible by the basis theorem for computably dominated sets, 1.8.42 of Ref.~\citenum{Nies:book}), see~Ref.~\citenum{Rupprecht:10} Corollary 27.

\item \emph{There  is a weakly meager engulfing  set $A$ of hyperimmune degree which computes no Schnorr random.}
Miller~\cite{Miller:hausdorff} (see also~Ref.~\citenum{Downey.Hirschfeldt:book}, 13.8) proved that there is a $\Delta^0_2$ set $A$
which has effective Hausdorff dimension ${1 \over 2}$ and does not compute a real of higher dimension.
Such $A$ necessarily is DNR (Ref.~\citenum{Downey.Hirschfeldt:book}, 13.7.6) and does not compute a Martin-L\"of
random. Rupprecht in Theorem VI.19 of Ref.~\citenum{Rup:The} showed that $A$ is low$_2$. In particular, $A$ is not
high and thus does not compute a Schnorr random either 
(Ref.~\citenum{Nies:book}, 3.5.13). Since $A $ is $\Delta^0_2$, it is  of hyperimmune degree.
%Is there a non-high $\Delta^0_2$ DNR degree which computes no 1-random?

\item \emph{There is a   set of hyperimmune degree which is not weakly meager engulfing.} Any non recursive low r.e.\ set will suffice. By Arslanov's completeness criterion (Ref.~\citenum{Nies:book}, 4.1.11), such a set cannot be DNR.
Rupprecht provides another source of examples in Theorem~VI.4
of his thesis\cite{Rup:The},
showing that no 2-generic real is weakly meager engulfing.

\item \emph{There is a weakly Schnorr engulfing set which is low for weak 1-genericity}. This is Ref.~\citenum{Rupprecht:10} Theorem~19.
\end{enumerate}

\begin{question} \label{qu:what's left}

\begin{enumerate}  \setcounter{enumi}{6}

\item Is there a weakly meager engulfing set $A$ which does not  compute a Schnorr random,  is of hyperimmune-free degree, and   is weakly Schnorr engulfing?  ($L$ consists of highness, computing a Schnorr random, and being of hyperimmune degree.)

%This follows by taking a set $A$ of low $PA$ degree. By Theorem \ref{thm:PAweaklySE} $A$ is weakly

\item Is there a weakly meager engulfing set which neither computes a Schnorr random nor is weakly Schnorr engulfing?  ($R$ consists of weakly meager engulfing, not low for weak 1 genericity, and not low for Schnorr tests.)

\item Is there a set which is not low for Schnorr tests, is low for weak 1-genericity and not
   weakly Schnorr engulfing? ($R$ only  contains the property of being not low for Schnorr tests.)

\end{enumerate}
\end{question}

Kumabe and Lewis~\cite{Kumabe.Lewis:09} show that there is a set $A$ of minimal Turing degree which is DNR. It was observed in Theorems~1.1 and 3.1 of Ref.~\citenum{Jockusch_lewis} that this construction also makes $A$ of hyperimmune-free degree. Since $A$ cannot compute a Schnorr random set, there is a positive answer to   (7)  or (8)  in Question~\ref{qu:what's left}.   We note that a positive answer to (9)    would refute Conjecture~31 of Ref.~\citenum{Rupprecht:10}. Kjos-Hanssen and Stephan have announced an affirmative answer to further questions stated above.
%OLD

%All arrows in the diagram denote \emph{proper} implications.
%The leftmost vertical implication follows by relativizing the fact that every Schnorr test is passed by a recursive real. The implication is proper because there is a Schnorr random that is not high.

%The position of ``weakly Schnorr engulfing'' is all settled by Rupprecht \cite{Rupprecht:10}. In Prop. 5  he builds a weakly Schnorr engulfing oracle that is low for weak 1-genericity.  In Thm.\ 8 he shows that if  $X$  is a Schnorr random real of hyperimmune-free
%degree and $N$ is Schnorr null relative to X, then the set of computable
%reals in $N$ is Schnorr null. In particular, by the diagram, not low for weak 1-generic and  weakly Schnorr engulfing are incomparable.

%**I think these two paragraphs should be rewritten in light of what I mean below in 2.2, Examples (J\"org)**

%%%%%%%%%%%%%%%%%%%%%%%%%%%%%%%%%%%%%%%%%%%%%%%%%%%%%%
%
%  Section 5: Other results
%
%%%%%%%%%%%%%%%%%%%%%%%%%%%%%%%%%%%%%%%%%%%%%%%%%%%%%%

\section{Other cardinal characteristics and their analogs}

%%%%%%   subsection 5.1

\subsection{Kurtz randomness and closed measure zero sets} A closed measure zero set necessarily is nowhere
dense. Thus the $\sigma$-ideal $\calE$ generated by closed measure zero sets is contained in both
$\calM$ and $\calN$; in fact, it is properly contained in $\calM \cap \calN$. In a combinatorially intricate
work~\cite{Bartoszynski.Shelah:92}, Bartoszy\'nski and Shelah computed the cardinal characteristics of $\calE$
(see alternatively Section~2.6 of Ref.~\citenum{Bartoszynski.Judah:book}). Main results are:
\begin{itemize}
\item[(A)] $\add(\calE) = \add(\calM)$ and, dually, $\cof(\calE) = \cof(\calM)$;
\item[(B)] $\add(\calE,\calN) = \cov(\calM)$ and, dually, $\cof(\calE,\calN) = \non(\calM)$;
\item[(C)] if $\cov(\calM) = \frd$, then $\cov(\calE) = \max \{ \cov (\calM), \cov (\calN) \}$; \\
   dually, if $\non(\calM) = \frb$, then $\non(\calE) = \min \{ \non (\calM), \non (\calN) \}$.
\end{itemize}
Here, for two ideals $\calI \subseteq \calJ$, $\add (\calI,\calJ)$ denotes the least size of a family of
sets in $\calI$ whose union does not belong to $\calJ$. Similarly, $\cof(\calI,\calJ)$ is the smallest
cardinality of a subfamily $\calF$ of $\calJ$ such that all members of $\calI$ are contained in a
set from $\calF$.

The notion corresponding to $\calE$ and its characteristics on the computability theory side is {\em Kurtz randomness}:
a {\em Kurtz test} is an effective sequence $(G_m)$ of clopen sets such that each $G_m$ has measure
at most $2^{-m}$. The corresponding null $\Pi^0_1$ class $\bigcap_m G_m$ is called a {\em Kurtz
null set}. It is well-known and easy to see that the definition of 
%Kurtz test and 
Kurtz null set is unchanged
if we additionally assume $G_{m+1} \sub G_m$ for all $m$.
A real $A$ is {\em Kurtz random} if it passes all Kurtz tests, i.e., $A$ avoids $\bigcap_m G_m$
for all Kurtz tests $(G_m)$.  An oracle $A$ is {\em low for Kurtz tests} if for every Kurtz test $ (G_m)$ relative to $A$, there is a Kurtz test $ (L_k)$  such that $\bigcap_m G_m \sub \bigcap_k L_k$. $A$ is {\em low for Kurtz randomness}  if every Kurtz random
is Kurtz random relative to $A$. Finally, $A$ is {\em low for Schnorr-Kurtz} if every
Schnorr random is Kurtz random relative to $A$.

Greenberg and Miller (Ref.~\citenum{Greenberg.Miller:09}, Theorem~1.1) proved that a set is low for Kurtz tests iff  it is low for weak 1-genericity. This is the computability theoretic analogue of the
dual form $\cof(\calE) = \cof(\calM)$ of (A) above. (They also observed that low for Kurtz
randomness is the same as low for Kurtz tests.) Furthermore, they showed (Ref.~\citenum{Greenberg.Miller:09}, Corollary~1.3) that
a set is low for Schnorr-Kurtz iff it is neither DNR nor high. Thus, by Ref.~\citenum{Kjos.ea:2011} Theorem~5.1
and our Theorem~\ref{thm:WME}, a set is not low for Schnorr-Kurtz iff it is weakly meager
engulfing. This corresponds to $\cof(\calE,\calN) = \non(\calM)$ in (B) above.
Finally, it is well-known (see e.g. Ref.~\citenum{Nies:book} Proposition~3.6.4) that a Kurtz random either is of hyperimmune degree
(and thus contains a weakly 1-generic) or is already Schnorr random, an analogue of
the first part of (C) above. (Note that the antecedent $\cov(\calM) = \frd$ is true in computability theory:
up to Turing degree, weakly 1-generic  = hyperimmune.)

We now look into the computability theoretic aspect of the dual results of the Bartoszy\'nski-Shelah
theorems. To this end, say that an oracle $A$ is {\em Kurtz engulfing} if there is an $A$-computable
sequence $\{ G^i \} = \{ (G^i_m ) \}$ of Kurtz tests relative to $A$ such that each Kurtz null set $\bigcap_m L_m$
is contained in some $\bigcap_m G^i_m$. $A$ is {\em weakly Kurtz engulfing} if there is such $\{ G^i \} = \{ (G^i_m ) \}$
such that $\bigcup_i \bigcap_m G^i_m$ contains all computable reals. Finally, $A$ is {\em Schnorr-Kurtz engulfing} if the union of all
Kurtz null sets is a Schnorr null set relative to $A$. Then we obtain:

\begin{theorem} The following are equivalent for an oracle $A$.   \label{thm:Kurtz1}
\begin{itemize}
\item[(i)] $A$ is high.
\item[(ii)] There is a Kurtz test $(G_m)$ relative to $A$ such that for all Kurtz tests $(L_m)$ and almost all $m$,
   $L_{2m} \cup L_{2m + 1} \subseteq G_m$.
\item[(iii)] There is a Kurtz test $(G_m)$ relative to $A$ such that for all Kurtz tests $(L_m)$ there
   is an $m_0$ with $\bigcap_{m \geq m_0} L_m \subseteq \bigcap_{m \geq m_0} G_m$.
\item[(iv)] $A$ is Kurtz engulfing. \end{itemize}
\end{theorem}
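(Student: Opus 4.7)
The plan is a cyclic chain $\text{(i)}\Rightarrow\text{(ii)}\Rightarrow\text{(iii)}\Rightarrow\text{(iv)}\Rightarrow\text{(i)}$, closely modeled on the proof of Theorem~\ref{thm:highmeager} but with Kurtz null sets in place of nowhere dense $\Pi^0_1$ classes. For $\text{(i)}\Rightarrow\text{(ii)}$, I would fix a dominant $f\leT A$ and a computable enumeration $\{(L^e_m)\}_{e\in\omega}$ of candidate Kurtz tests, together with stage functions $g_e(m)$ equal to the least $s$ by which $L^e_0,\ldots,L^e_m$ have all been verified as clopen sets of the required measures (so $g_e$ is total iff $(L^e_m)$ is a genuine Kurtz test). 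I would then set
\[
G_m \;=\; \bigcup\bigl\{L^e_{2m}\cup L^e_{2m+1}:e\le m\text{ and }g_e(2m+1)\le f(m)\bigr\}
\]
for $m\ge 3$, and $G_m=\emptyset$ otherwise. A direct count gives $\lambda(G_m)\le (m+1)\cdot 2^{-2m+1}\le 2^{-m}$, so $(G_m)$ is a Kurtz test relative to $A$; and since $f$ dominates the total computable $m\mapsto g_e(2m+1)$ whenever $(L^e_m)$ is a Kurtz test, the inclusion $L^e_{2m}\cup L^e_{2m+1}\subseteq G_m$ holds for almost all $m$, giving (ii).

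The middle steps are routine. For $\text{(ii)}\Rightarrow\text{(iii)}$, given a Kurtz test $(L_m)$ I would first monotonize by $\tilde L_m=\bigcap_{k\le m}L_k$ (still a Kurtz test with the same intersection), then pick $m_0$ with $\tilde L_{2m}\cup\tilde L_{2m+1}\subseteq G_m$ for all $m\ge m_0$; then $\bigcap_{k\ge m_0}L_k=\bigcap_{k\ge m_0}\tilde L_k\subseteq\tilde L_{2m}\subseteq G_m$ for each such $m$, so $\bigcap_{k\ge m_0}L_k\subseteq\bigcap_{m\ge m_0}G_m$. For $\text{(iii)}\Rightarrow\text{(iv)}$, I would set $G^i_m:=G_{m+i}$, yielding an $A$-computable family of Kurtz tests; the $m_0$ supplied by (iii) for a monotone Kurtz test with intersection $C$ then gives $C\subseteq\bigcap_m G^{m_0}_m$.

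The main obstacle is $\text{(iv)}\Rightarrow\text{(i)}$, analogous to the hard direction of Theorem~\ref{thm:highmeager}. Assuming $\{G^i\}$ witnesses Kurtz engulfing, I would define $f(n)\leT A$ to be the maximum over $i,m\le n$ of the length at which $G^i_m$ is exhibited as a finite union of basic cylinders. To see $f$ is dominant, suppose towards a contradiction that a computable $h$ satisfies $h(x_n)>f(x_n)$ for an increasing sequence $\{x_n\}$. I would construct a computable tree $T\subseteq 2^{<\omega}$ whose body $[T]$ is null, and in parallel build $X\in[T]$ computable in $A$ diagonalizing against every $\bigcap_m G^i_m$: at stage $i$, with current node $\sigma_i\in T$, find $\sigma_{i+1}\in T$ extending $\sigma_i$ with $[\sigma_{i+1}]\cap G^i_{m_i}=\emptyset$ for a suitable $m_i$, the existence of the required extension depending on $h(x_{j_i})>f(x_{j_i})$ for a fresh $x_{j_i}$. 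The resulting $X=\bigcup_i\sigma_i\in[T]$ then witnesses $[T]\not\subseteq\bigcap_m G^i_m$ for every $i$, contradicting Kurtz engulfing. The delicate point, absent from the meager engulfing argument, is that $T$ must be thin enough that the fraction of level-$N$ strings in $T$ tends to $0$ (so that $[T]$ is Kurtz null) while still branching enough for the diagonalization; the natural modification is to shorten the free blocks of the good-string tree used in the proof of Theorem~\ref{thm:highmeager} so that the cumulative density tends to zero.
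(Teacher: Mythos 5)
Your decomposition into the chain (i)$\Rightarrow$(ii)$\Rightarrow$(iii)$\Rightarrow$(iv)$\Rightarrow$(i) matches the paper's, but the contents differ substantially in the two nontrivial directions, and the last one has a real gap.

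For (i)$\Rightarrow$(ii), your argument via a dominant function and an enumeration of candidate Kurtz tests with verification stages $g_e$ is correct and works, but it is genuinely different from the paper's: the paper instead invokes Rupprecht's Theorem~6 (high iff $A$ computes a trace tracing all computable functions) and codes the test $(L_m)$ as the computable function $m\mapsto$ index of $L_m$ in a fixed enumeration of clopen sets of measure $\le 2^{-m}$. Your route avoids the trace characterization at the cost of the enumeration machinery; both are fine. In (ii)$\Rightarrow$(iii) your chain of inclusions contains a false step: with $\tilde L_m=\bigcap_{k\le m}L_k$, one has $\bigcap_{k\ge m_0}\tilde L_k=\bigcap_k L_k$, which is in general a \emph{proper} subset of $\bigcap_{k\ge m_0}L_k$, so the asserted equality $\bigcap_{k\ge m_0}L_k=\bigcap_{k\ge m_0}\tilde L_k$ fails. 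The direction is nonetheless trivial, without any monotonization: applying (ii) to $(L_m)$ itself gives $m_0$ with $L_{2m}\subseteq G_m$ for $m\ge m_0$, and then for $m\ge m_0$ one has $\bigcap_{k\ge m_0}L_k\subseteq L_{2m}\subseteq G_m$ since $2m\ge m_0$. Your (iii)$\Rightarrow$(iv) via $G^i_m:=G_{m+i}$ is fine.

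The serious problem is (iv)$\Rightarrow$(i). Your $f(n)=\max_{i,m\le n}(\text{cylinder length of }G^i_m)$ is too weak. When you arrive at a node $\rho=\sigma_i\ast 0^k\ast 1$ of length $x_j$ and want to avoid $G^i_m$ by choosing the free part $\tau'$, the only levels $m$ whose cylinder lengths your $f$ controls are $m\le x_j$. But then $\lambda G^i_m\le 2^{-m}$ can be as large as $2^{-x_j}=\lambda[\rho]$, so it is entirely possible that $[\rho]\subseteq G^i_m$ --- the forced bits $0^k\ast1$ (and the earlier forced blocks) can themselves land the construction inside the clopen set, and no choice of $\tau'$ helps. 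This is exactly the point where small-measure clopen sets behave differently from nowhere dense closed sets: the category argument from Theorem~\ref{thm:highmeager} does not transfer. The paper handles this by defining $f$ with the recursion $f(n+1)\ge f(n)+(n+1)^2$, which lets it target $G^i_m$ at the much larger level $m=f(n)+(n+1)^2>f(n)=$ (current prefix length), ensuring $\lambda G^i_m<\lambda[X\restriction f(n)]$; and by abandoning the good-string tree entirely in favor of the simple test $L_m=\{X:X\restriction J_m\equiv 0\}$ with $J_m$ an interval of length $m$ placed by $h$, which keeps the forced bits within $[f(n),f(n+1))$ bounded by $n^2$ against a free region of size $\ge(n+1)^2$, making the measure estimate immediate. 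Your ``delicate point'' (thinning the free blocks so $[T]$ is Kurtz null) is a red herring --- the good-string tree of Theorem~\ref{thm:highmeager} already has density tending to $0$; the real missing ingredient is the recursion in $f$ and the control over where the forced bits fall.
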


\begin{proof}
(i) $\Rightarrow$ (ii): We use Theorem 6 of Ref.~\citenum{Rupprecht:10}: since $A$ is high, there is a trace $\sigma \leT A$ tracing
all computable functions. Fix a computable coding $c(m,i)$ of all basic clopen sets in $2^\omega$ such that for fixed $m$,
$c (m, \cdot)$ lists all basic clopen sets of measure $\leq 2^{-m}$.
Let $G_m = \bigcup \{ c(2m,i) : i \in \sigma (2m) \} \cup \bigcup \{ c(2m + 1 , i) :
i \in \sigma (2m + 1) \}$. Then $\lambda G_m \leq 2m \cdot 2^{-2m} + (2m + 1) \cdot 2^{- (2m +1)} < 2^{-m}$ for $m \geq 4$. So,
changing finitely many $G_m$ if necessary, we may think of $(G_m)$ as a Kurtz test relative to $A$. Now, given a
Kurtz test $L = (L_m)$, define a function $f = f_L$ by $f (m) = \min\{ i : L_m = c(m,i) \}$. Clearly $f$ is computable
and $L_m = c(m,f(m))$. Thus $f \in^* \sigma$.
This means that $L_{2m} \cup L_{2m +1} = c(2m, f (2m)) \cup c(2m +1, f (2m + 1)) \sub G_m$ for almost all $m$,
as required.

(ii) $\Rightarrow$ (iii) $\Rightarrow$ (iv): Trivial.

(iv) $\Rightarrow$ (i): Assume $A$ is not high and $\{ G^i \} = \{ (G^i_m ) \}$ is an $A$-computable sequence of
Kurtz tests relative to $A$. This means that there are sequences $(k^i_m)$ and $(\sigma^i_{m,j})$ computable in $A$
such that each $G^i_m$ is of the form $G^i_m = \bigcup_{j < k^i_m} [\sigma^i_{m,j}]$. It is easy to see that
each $\sigma^i_{m,j}$ belongs to $2^k$ for some $k \geq m$. It suffices to find a Kurtz test $(L_m)$ and
a set $X \leT A$ such that $X \in \bigcap_m L_m$ yet $X \notin \bigcup_i \bigcap_m G^i_m$, that is,
for each $i$ there is $m$ with $X \notin G^i_m$.

Define a function $f \leT A$ as follows.
\[\begin{array}{rcl}
f(0) & = & 0 \\
f(n+1) & = & \min \{ k : \; \forall m \leq f(n) + (n+1)^2 \; \forall i < n+ 1 \; \forall j < k^i_m \; (\sigma^i_{m,j} \in 2^{\leq k}) \} \\
\end{array}\]
Then clearly $f(n+1) \geq f(n) + (n+1)^2$; in particular, $f$ is strictly increasing. Since $A$ is not high, there is a
computable function $h$ with $h \not\leq^* f$. We may assume that $h$ is strictly increasing as well. Let $J_m$
be the interval $[h(m) , h(m) + m)$ of length $m$. Put $L_m = \{ X : X \restriction J_m \equiv 0 \}$. Then $\lambda
L_m = 2^{-m}$ and therefore $(L_m)$ is a Kurtz test. We produce the required $X \in \bigcap_m L_m \setminus
\bigcup_i \bigcap_m G^i_m$ by recursively defining $X \restriction f(n)$ such that $X \restriction (f(n) \cap J_m) \equiv 0$
for all $m$.

$X \restriction f(0) = X \restriction 0$ is the trivial sequence. Assume $X \restriction f(n)$ has been defined.
If $h(n+1) \leq f(n+1)$ simply extend $X \restriction f(n)$ to $X \restriction f(n+1)$ such that $X \restriction (f(n+1) \cap J_m)
\equiv 0$ for all $m$. So suppose $h(n+1) > f(n+1)$. Note that, when extending $X$ from $f(n)$ to $f(n+1)$, there
are at most $| \bigcup_{m \leq n} J_m | \leq n^2$ many places where $X$ necessarily has to assume the value $0$.
Hence the measure of the set of possible extensions of $X$ to $f(n+1)$ is $\geq 2^{- (f(n) + n^2)}$. On the other hand,
\[\lambda \left(\bigcup \{ G^i_{f(n) + (n+1)^2}  : {i \leq n} \} \right) \leq (n+1) 2^{- (f(n) + (n+1)^2)} < 2^{- (f(n) + n^2)}\] This means that
we can extend $X \restriction f(n)$ to $X \restriction f(n+1)$ such that $X \restriction (f(n + 1) \cap J_m) \equiv 0$ for all $m$ and
$[X \restriction f(n+1)] \cap G^i_{f(n) + (n+1)^2} = \emptyset$ for all $i \leq n$. Since $h(n+1) > f(n+1)$ for infinitely many $n$,
$X$ is as required.
\end{proof}

\begin{theorem} The following are equivalent for an oracle $A$.  \label{thm:Kurtz2}
\begin{itemize}
\item[(i)] $A$ is of hyperimmune degree.
\item[(ii)] $A$ is Schnorr-Kurtz engulfing. \end{itemize}
\end{theorem}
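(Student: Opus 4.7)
The plan is to prove the two directions separately.

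\medskip

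\textbf{For direction (i)$\Rightarrow$(ii).}  Fix $f\leT A$ not dominated by any total computable function.  I would enumerate the potential Kurtz tests $(L^e_m)_{e,m}$ in the usual uniform way, letting $L^{e,s}_m$ denote the stage-$s$ approximation: it equals $L^e_m$ if the $e$-th procedure has converged at input~$m$ by stage~$s$ with a clopen set of measure at most $2^{-m}$, and equals $\emptyset$ otherwise.  Put
\[
G_k \;=\; \bigcup_{e}\bigcup_{j\ge e+k+1} L^{e,f(j)}_{j}.
\]
Each $G_k$ is $\Sigma^0_1(A)$; its measure is bounded by $\sum_{e}\sum_{j\ge e+k+1}2^{-j}\le 2^{-(k-1)}$ and is $A$-computable uniformly in~$k$ (truncate to a finite subunion and use the explicit tail bound).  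After shifting indices by one, $(G_k)$ is a Schnorr test relative to~$A$.  For any genuine Kurtz test~$e$, the settling-stage function $s_e(m)=\min\{s:L^{e,s}_m=L^e_m\}$ is total computable; since $f$ is not dominated by $s_e$, the set $\{j:f(j)\ge s_e(j)\}$ is infinite, so for every~$k$ I can pick some $j^*\ge e+k+1$ in it, giving $L^{e,f(j^*)}_{j^*}=L^e_{j^*}\supseteq \bigcap_m L^e_m$.  Hence $G_k\supseteq \bigcap_m L^e_m$ for each $k$, so $\bigcap_k G_k$ engulfs every Kurtz null set.

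\medskip

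\textbf{For direction (ii)$\Rightarrow$(i).}  I argue contrapositively.  Assume $A$ is of hyperimmune-free degree and fix a Schnorr $A$-test $(G_k)$; the task is to produce a Kurtz test $(L_m)$ with $\bigcap_m L_m\not\subseteq \bigcap_k G_k$.  Write $G_k=\bigcup_s V^k_s$ as an $A$-computable increasing union of clopen sets.  Since $\lambda G_k$ is $A$-computable, the function $\alpha(k,n)=\min\{s:\lambda V^k_s>\lambda G_k-2^{-n}\}$ is $A$-computable and, by hyperimmune-freeness, is dominated by some total computable $\beta(k,n)$.  Fix a small $k_0$ (say $k_0=2$, so $\lambda G_{k_0}\le 1/4$).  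The plan is to build $(L_m)$ computably, nested with $\lambda L_m\le 2^{-m}$, in such a way that for every~$n$ the clopen set $L_m$ is not contained in $V^{k_0}_{\beta(k_0,n)}$; the measure gap between $\lambda L_m$ and the computable upper bound $\lambda V^{k_0}_s\le 2^{-k_0}$ provides the combinatorial room to do so, and a compactness/König argument then yields $X\in\bigcap_m L_m$ with $X\notin G_{k_0}$, so $\bigcap_m L_m\not\subseteq \bigcap_k G_k$.

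\medskip

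\textbf{Main obstacle.}  The delicate point is direction~(ii)$\Rightarrow$(i): although $\beta$ is genuinely computable, the approximating sets $V^{k_0}_{\beta(k_0,n)}$ themselves are only $A$-clopen, so one cannot simply take their computable complements to drive the construction of $(L_m)$.  The construction must instead rely only on the numerical data from $\beta$ and the computable measure bound $\lambda V^{k_0}_s\le 2^{-k_0}$, and a combinatorial argument valid uniformly across all possible $A$-codes of $V^{k_0}_{\beta(k_0,n)}$ is what guarantees that enough clopen freedom remains at each stage to preserve the escape property while keeping $(L_m)$ computable.
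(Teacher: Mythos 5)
Your direction (i)$\Rightarrow$(ii) is correct and is in fact a different route from the paper's: the paper takes an $f\leT A$ that is infinitely often equal to every computable function (via Proposition~\ref{prop:W1G}) and covers all Kurtz null sets by the single limsup test $\bigcap_m\bigcup_{n\ge m}c(n,f(n))$, whereas you use non-domination together with the settling times of all candidate Kurtz tests and take a union over indices; your measure estimate and the uniform $A$-computability of $\lambda G_k$ (finite truncation plus tail bound) go through, so this half is fine.

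Direction (ii)$\Rightarrow$(i), however, has a genuine gap, and it is exactly the point you label ``Main obstacle'': no argument is given that a Kurtz test built \emph{obliviously} from the numerical data $\beta$ can be forced to poke out of $G_{k_0}$, and the measure heuristic you invoke does not supply one. Since $\lambda L_m\le 2^{-m}\to 0$ while the adversary's budget $\lambda G_{k_0}\le 2^{-k_0}$ is fixed, there is no ``measure gap'' in your favour at later levels: once the constrained coordinates of the nested $L_m$ carry more than $k_0$ bits of information, an open set of measure $2^{-k_0}$ made of cylinders coarser than those coordinates can swallow $L_m$ entirely, and nothing knowable from $\beta$ (a bound on the measure-convergence modulus) excludes this; in the extreme case of a \emph{computable} Schnorr test, $\beta$ carries no information beyond what is already computable, yet escape must still be proved, so domination of the measure modulus cannot be the engine of the proof. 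The paper's argument uses different ingredients at precisely this point: it first passes to the normal form $N=\bigcap_n\bigcup_{m\ge n}E_m$ with $A$-clopen $E_m$, $\lambda E_m\le 2^{-m}$; it applies hyperimmune-freeness to dominate the \emph{granularity} function $f$ (the lengths of the strings constituting the $E_m$), not a measure modulus; the Kurtz test is the computable zero-block test $L_m=\{X: X\restriction [h(m),h(m)+m)\equiv 0\}$ with $h\ge^* f$, and, crucially, the escaping point $X$ is \emph{not} obtained by compactness from the test alone but is constructed recursively in $A$, choosing the free bits between consecutive values of $f$ so as to dodge the relevant $E_\ell$, with an explicit bookkeeping condition controlling the measure of future $E_\ell$ inside the current cylinder so that the dodging can be sustained. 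To repair your proof you would need either to import this adaptive, $A$-recursive construction of $X$ (in which case the single-level $G_{k_0}$ target and the modulus function $\alpha$ should be replaced by the limsup form and the granularity function), or to supply the ``combinatorial argument valid uniformly across all possible $A$-codes'' you allude to, which at present does not exist in the proposal.
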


\begin{proof}
(i) $\Rightarrow$ (ii): By Proposition~\ref{prop:W1G}, we know that there is a function $f \leT A$
infinitely often equal to all computable reals. As in the previous proof, let $c (n, \cdot)$ be a computable coding of all basic clopen sets
of measure $\leq 2^{-n}$. Then $h (n) = c (n, f(n))$ is a sequence of clopen sets computable in $A$
with $\lambda ( h(n)) \leq 2^{-n}$. Thus, by Ref.~\citenum{Rupprecht:10} Proposition~3,
$N = \bigcap_m \bigcup_{n\geq m} h(n)$ is a Schnorr null set relative to $A$. We need to show
it contains all Kurtz null sets. Let $(G_n)$ be a Kurtz test. Then $G_n = c(n,k(n))$
for some computable function $k$. Hence $k(n) = f(n)$ for infinitely many $n$. Now assume
$X \in \bigcap_n G_n$. Fix any $m$. There is $n \geq m$ with $k(n) = f(n)$. Thus
$X \in G_n = c(n,k(n)) = c (n,f(n)) = h(n) \subseteq \bigcup_{\ell\geq m} h(\ell)$. Unfixing $m$ we
see that $X$ belongs to $N$, as required.

(ii) $\Rightarrow$ (i): Assume $A$ is of hyperimmune-free degree and $N$ is a Schnorr null set relative to $A$.
By Ref.~\citenum{Rupprecht:10} Proposition~3, we may assume that $N = \bigcap_n \bigcup_{m \geq n} E_m$
where $E_m$ is a sequence of clopen sets computable in $A$ with $\lambda E_m \leq 2^{-m}$.
There are sequences $(k_m)$ and $(\sigma_{m,j})$ computable in $A$ such that each $E_m$ is
of the form $E_m = \bigcup_{j < k_m} [\sigma_{m,j}]$. It suffices to
find a Kurtz test $(L_m)$ and a set $X \leT A$ such that $X \in \bigcap_m L_m$ yet $X \notin N$, that is,
there is some $m_0$ with $X \notin \bigcup_{m \geq m_0} E_m$.

We proceed as in the proof of (iv) $\Rightarrow$ (i) of Theorem~\ref{thm:Kurtz1}. Define $f \leT A$ by:
\[\begin{array}{rcl}
f(0) & = & 0 \\
f(n+1) & = & \min \{ k : \; \forall m \leq f(n) + (n+1)^3 \; \forall j < k_m \; (\sigma_{m,j} \in 2^{\leq k}) \} \\
\end{array}\]
Since $A$ is  of hyperimmune-free degree, there is a computable function $h$ with $h \geq^* f$. We may assume
that $h \geq f$ everywhere and that $h$ is strictly increasing. As in the previous proof, let $J_m$
be the interval $[h(m) , h(m) + m)$ of length $m$ and define a Kurtz test $(L_m)$ by $L_m =
\{ X : X \restriction J_m \equiv 0 \}$. Let $m_0 = f(2) + 28$. We produce the required $X \in \bigcap_m L_m \setminus
\bigcup_{m \geq m_0} E_m$ by recursively defining $X \restriction f(n)$ such that
\begin{itemize}
\item[(a)] $X \restriction (f(n) \cap J_m) \equiv 0$ for all $m (< n)$,
\item[(b)] $[X \restriction f(n) ] \cap \bigcup \{ E_\ell : {f(2) + 27 < \ell \leq f(n-1) + n^3} \}= \emptyset$,
\item[(c)] $\lambda ( [X \restriction f(n) ] \cap \bigcup \{ E_\ell : {f(n-1) + n^3 < \ell \leq f(n) + (n+1)^3} \} )
< 2^{- (f(n) + (n-1) n^2)}$ for $n\geq 3$.
\end{itemize}
Clearly, an $X$ satisfying the first two properties for all $n$ is as required. The third property is used to
guarantee the second property along the recursive construction.

$X \restriction f(2)$ is arbitrary satisfying (a). When defining $X \restriction f(3)$, (b) vacuously holds.
Between $f(2)$ and $f(3)$, there are at most $| J_0 \cup J_1 \cup J_2 | = 3$ many places where
$X$ necessarily has to assume the value $0$. Hence the set of extensions of $X$ to $f(3)$ satisfying (a) for $n=3$
has measure at least $2^{- (f(2) + 3)}$. On the other hand,
\[\lambda \left( \bigcup \{ E_\ell : {f(2) + 27 < \ell \leq f(3) + 64} \} \right) < 2^{- (f(2) + 27)}\]
This means that the relative measure of the latter set in the set of possible
extensions is smaller than ${2^{3} \over 2^{3^3} } < 2^{ 3^2 - 3^3 } = 2^{ - 2 \cdot 3^2}$.
Hence there must be one such extension $X \restriction f(3)$ satisfying
$$\lambda \left( [X \restriction f(3) ] \cap \bigcup \{ E_\ell : {f(2) + 27 < \ell \leq f(3) + 64} \} \right)< 2^{- (f(3) + 2 \cdot 3^2)}$$
Thus (c) holds for $n=3$.

More generally, suppose $X\restriction f(n)$ has been defined for $n \geq 3$. Between $f(n)$ and $f(n+1)$, there
are at most $| \bigcup_{m \leq n} J_m | \leq n^2$ many places where $X$ necessarily has to assume the value $0$.
Hence the set of extensions of $X$ to $f(n+1)$ satisfying (a) for $n+1$ has measure at least $2^{- (f(n) + n^2)}$.
By (c) for $n$ we see that the set of extensions satisfying both (a) and (b) for $n+1$ has measure
at least $2^{- (f(n) + n^2 + 1)}$. On the other hand,
\[\lambda \left( \bigcup \{ E_\ell  : {f(n) + (n+1)^3 < \ell \leq f(n+1) + (n+2)^3} \} \right) < 2^{- (f(n) + (n+1)^3)}\]
This means that the relative measure of the latter set in the set of possible
extensions is smaller than ${2^{n^2 +1} \over 2^{(n+1)^3} } < 2^{ (n+1)^2 - (n+1)^3 } = 2^{ - n (n+1)^2}$.
Hence there must be one such extension $X \restriction f(n+1)$ satisfying
\begin{align*}
&\lambda \left( [X \restriction f(n+1) ] \cap \bigcup \{  E_\ell : {f(n) + (n+1)^3 < \ell \leq f(n+1) + (n+2)^3} \} \right)
\\
&< 2^{- (f(n+1) + n (n+1)^2)}.
\end{align*}
Again, this gives (c) for $n+1$.
\end{proof}

\begin{theorem}\label{thm:PAweaklySE} Each PA set is weakly Kurtz engulfing.
\end{theorem}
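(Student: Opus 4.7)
The plan is to exploit the defining property of PA oracles: if $A$ is PA then, uniformly in $e$, $A$ computes a total $\{0,1\}$-valued function $g_e$ extending the $e$th partial computable $\{0,1\}$-valued function (equivalently, $A$ uniformly computes a member of every nonempty $\Pi^0_1$ subclass of $\Cantor$; see e.g.\ Ref.~\citenum{Nies:book}). This lets us cover each individual computable real with a tailor-made Kurtz test relative to $A$.

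First, I would enumerate all partial computable $\{0,1\}$-valued functions as $(\psi_e)_{e\in\omega}$ and consider, uniformly in $e$, the nonempty $\pic$ class
\[
P_e \; = \; \{ X \in \Cantor \colon \fa n\, (\psi_e(n)\!\downarrow \,\to\, X(n) = \psi_e(n)) \}.
\]
Using the PA property of $A$, I obtain a sequence $(g_e)_{e\in\omega}$ of total $\{0,1\}$-valued functions, uniformly $\leT A$, with $g_e \in P_e$.

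Second, for each $e$ and $m$ I would set $G^e_m = [g_e \restriction m]$, the clopen cylinder in $\Cantor$ above the length-$m$ initial segment of $g_e$. Then $\measure G^e_m = 2^{-m}$, the double sequence $(G^e_m)_{e,m}$ is uniformly $A$-computable, so $(G^e_m)_m$ is a Kurtz test relative to $A$ for each $e$, and $\bigcap_m G^e_m = \{g_e\}$.

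Finally, to verify that $\bigcup_e \bigcap_m G^e_m$ contains every computable $X \in \Cantor$, simply pick an index $e$ with $\psi_e = X$ (total); then $g_e$ extends $\psi_e = X$, so $g_e = X$, giving $X \in \bigcap_m G^e_m$. No substantive obstacle is anticipated: the entire content of the argument is the uniform basis theorem for $\pic$ classes restricted to PA oracles, which converts the enumeration of partial computable reals into an $A$-computable enumeration of total reals, each singleton of which is trivially a Kurtz null set relative to $A$.
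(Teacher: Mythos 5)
Your proof is correct and is essentially the same argument as the paper's: there, the PA oracle's $\{0,1\}$-valued diagonally nonrecursive function $g$ is used, via a computable padding sequence $\{R_e\}$, to produce exactly the total completions $Z_e(j)=1-g(r_e(j))$ of the partial computable $\{0,1\}$-valued functions --- that is, the paper proves inline the uniform completion/basis property of PA oracles that you cite as a black box --- and these $A$-computable reals are then captured by an $A$-computable family of Kurtz tests. The only difference is packaging: you use one singleton-intersection test $G^e_m=[g_e\restriction m]$ per index, while the paper bundles everything into a single test $G_k=\bigcup_{i<k}[Z_i\restriction 2k]$ and uses its tails as the required family; both satisfy the definition of weakly Kurtz engulfing.
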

\begin{proof}
Fix an oracle $A$ computing a $\{0,1\}$-valued function $g$ such that for all $e$, if $J(e) := \varphi_e (e) \downarrow$, then we have $J(e)\neq g(e)$. If $\varphi_e$ is $\{0,1\}$-valued and total then it gives rise naturally to a computable real $X$ where $X(n)=\varphi_e(n)$. Furthermore every computable real can be identified with a total $\varphi_e$ for some $e$.

There is a computable sequence $\{R_e\}$ of pairwise disjoint computable sets such that for every $e$ and $n$, if $\varphi_e(n)\downarrow$ then $J(r_e(n))\downarrow =\varphi_e(n)$, where $r_e(n)$ is the $n^{th}$ element of $R_e$. Now define the $A$-Kurtz test $\{G_k\}$ by
\[G_k=\bigcup_{i<k}[Z_i\upharpoonright 2k],\]
where for every $i$, $Z_i$ is the infinite binary sequence defined by $Z_i(j)=1-g(r_i(j))$. It is then easy to see that every computable real $X$ belongs to $\bigcup_e \bigcap_{k \geq e} G_k$.
\end{proof}

\begin{corollary}
There is a hyperimmune-free weakly Kurtz engulfing degree.
\end{corollary}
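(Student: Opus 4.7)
The plan is to exhibit such a degree by a direct combination of Theorem~\ref{thm:PAweaklySE} with a classical basis theorem. Specifically, I would pick any set $A$ of hyperimmune-free PA degree; such a set exists because the collection of $\{0,1\}$-valued diagonally nonrecursive functions forms a nonempty $\Pi^0_1$ class, to which the hyperimmune-free basis theorem of Jockusch and Soare applies. This is the same source of examples used in item (2) of Subsection~\ref{ss:separate}, where the existence of a hyperimmune-free PA set is extracted from 1.8.32 and 1.8.42 of Ref.~\citenum{Nies:book}. Applying Theorem~\ref{thm:PAweaklySE} to this $A$ immediately yields that $A$ is weakly Kurtz engulfing, and since $A$ was chosen of hyperimmune-free degree, the degree of $A$ witnesses the corollary.

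There is no substantive obstacle here: the content has just been produced in Theorem~\ref{thm:PAweaklySE}, and the compatibility of the PA property with hyperimmune-freeness is a standard basis-theoretic fact. The only conceptual point worth flagging in the write-up is that weakly Kurtz engulfing is a degree-invariant property in the relevant sense, so moving from the specific set $A$ produced by the basis theorem to its Turing degree is harmless. Accordingly, the proof should be no more than two lines, consisting of the choice of $A$ followed by a citation of Theorem~\ref{thm:PAweaklySE}.
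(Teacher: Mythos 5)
Your proposal matches the paper's own argument exactly: take a set of hyperimmune-free PA degree, obtained from the $\Pi^0_1$ class of $\{0,1\}$-valued diagonally nonrecursive functions via the basis theorem for computably dominated sets (Ref.~\citenum{Nies:book} 1.8.32 and 1.8.42), and apply Theorem~\ref{thm:PAweaklySE}. The paper's proof is the same two-step citation, so your write-up is correct and essentially identical in approach.
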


\begin{proof}
It is well-known that there is a hyperimmune-free PA degree. (Use the fact that the PA degrees form a $\Pi^0_1$ class and
the basis theorem for computably dominated sets. See Ref.~\citenum{Nies:book} 1.8.32 and 1.8.42.)
See also item (2) in Section~\ref{ss:separate} above.
\end{proof}

We have no characterization of ``weakly Kurtz engulfing" in terms of the other properties and conjecture
there is none. More specifically, we conjecture there is a set both weakly meager engulfing and weakly Schnorr engulfing
that is not weakly Kurtz engulfing. (Note that the antecedent of the second part of (C), $\non(\calM) = \frb$,
is false in computability theory: high is strictly stronger than weakly meager engulfing.)

%\subsection{Splitting number and reaping number}

%Set theory: least size of a subset   $\+ S $ of $\+ P (\omega)$ such that every infinite  set is split by a set in $\+ S$ into two infinite parts.

%This is below $d$, and can be  incomparable with $b$

%KNOWN: $\frs \leq \frd, \non (\calE)$, rest consistent.

%Computability theory: r-cohesive set. Same as cohesive for non high. Stephan and Jocksuch 1993 show can be non-high.

%ANALOGUE SHOULD BE: IMPLICATION: r-cohesive implies both hyperimmune and weakly Kurtz engulfing (so also meager engulfing and
%Schnorr engulfing). (known?)

%EXAMPLE: There is r-cohesive which is not Schnorr random (is this known?)

%\bigskip

%Set theory: Reaping number

%KNOWN: $\frr \geq \frb , \cov (\calE)$, rest consistent.

%Computability: biimmune degree, i.e.\  $A$ computes  a set $X$ that splits every infinite recursive set. $\exists$ two Jockusch papers on this from 1969, 1972.

%ANALOGUE SHOULD BE: IMPLICATION: Kurtz random implies biimune (in particular, both Schnorr random and hyperimmune
%imply biimune). (known?)

%EXAMPLE: There is a weakly meager engulfing weakly Schnorr engulfing not biimune degree  (is this known?)

%%%%%%%%%%   subsection 5.2

\subsection{Specker-Eda number and its dual}

The \emph{Specker-Eda number} $\se$ is a cardinal characteristic
introduced by Blass~\cite{Blass:94} in the context of homomorphisms of
abelian groups.
Whilst the original definition would take us too far afield,
there is an equivalent formulation
due to Brendle and Shelah~\cite{Brendle.Shelah:96} that fits in well with
the cardinal characteristics we have already considered.

\begin{definition}
A \emph{partial $g$-slalom} is a function $\varphi:D\to[\omega]^{<\omega}$
with domain $D$ an infinite subset of $\omega$, satisfying
$|\varphi(n)|\leq g(n)$ for all $n\in D$.
\end{definition}
In defining (total) slaloms in Section~\ref{ss:combCichon}, we implicitly
took $g$ to be the identity function.
In the set-theoretic context,
the specific choice of $g$ is in fact irrelevant for our purposes so long as
it goes to infinity; we have given the definition in this way for the sake of
the analogy to come.
As such, for Definition~\ref{sedefn}
we think of $g$ as being fixed.
\begin{definition}\label{sedefn}
The
Specker-Eda number $\se$ is the unbounding number for the relation of being
traced by a partial slalom:
\begin{multline*}
\se =\frb(\in^*_p) =\min\{|\calF|:\calF\subseteq\omega^\omega\land
\forall\text{ partial slalom }\sigma
\exists f\in\calF\\\exists^{\infty}n\in\dom(\sigma)(f(n)\notin\sigma(n))\}.
\end{multline*}
We denote its dual by $\frd(\in^*_p)$:
\begin{multline*}
\frd(\in^*_p)=\min\{|\Phi|:\Phi\text{ is a set of partial slaloms }
\land\\
\forall f\in\omega^\omega
\exists \sigma\in\Phi\forall^{\infty}n\in\dom(\sigma)(f(n)\in\sigma(n))\}.
\end{multline*}
\end{definition}

The cardinal $\se$ sits in the curious part of the diagram in which
cardinals are different set-theoretically but their
computability-theoretic analogues are equivalent
notions: $\add(\calN)\leq\se\leq\add(\calM)$, and each of these inequalities
may be strict --- see Ref.~\citenum{Brendle.Shelah:96} Corollary~(b).

The fact that we are considering partial slaloms raises the consideration of
partial computable rather than just computable traces.  For the analogue
of $\se$, we shall see that the choice is immaterial.

\begin{definition}\label{partialtrace}
Given an infinite computably enumerable subset $D$ of $\omega$ and a partial computable function
$g$ dominating the identity with $D \subseteq \dom (g)$,
a \emph{$(D,g)$-trace} is a partial computable function $\sigma$ from
$D$ to $[\omega]^{<\omega}$ such that for all $n\in D$, $|\sigma(n)|<g(n)$.
A \emph{partial trace} is a $(D,g)$-trace for some $D$ and $g$.
We say that a $(D,g)$-trace $\sigma$ \emph{traces} $f:\omega\to\omega$
if $f(n)\in\sigma(n)$ for all but finitely many $n\in D$.
\end{definition}

The computability-theoretic analogue of $\se$ is the property that
$A$ computes a partial trace tracing every computable function. This property coincides with being high.

\begin{theorem}
The following are equivalent for any oracle $A$.
\renewcommand\theenumi{\roman{enumi}}
\begin{enumerate}
\item\label{setotal} $A$ computes a trace tracing every computable function (i.e. $A$ is high).
\item\label{separtial} $A$ computes a partial trace
tracing every computable function.
\end{enumerate}
\renewcommand\theenumi{\arabic{enumi}}
\end{theorem}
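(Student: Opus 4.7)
The direction (i) $\Rightarrow$ (ii) is trivial: take $D=\omega$, so that a total trace is immediately a partial one. The content lies in (ii) $\Rightarrow$ (i). The plan is to invoke the Martin characterisation of highness recalled in the introduction and produce a single $A$-computable function that dominates every computable function. So let $\sigma$ be an $A$-partial-computable $(D,g)$-trace tracing every computable function, with $D$ an infinite $A$-c.e.\ set and $g$ partial $A$-computable on $D$.

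The first step is to pass to the ``trace along the enumeration of $D$''. Since $D$ is infinite and $A$-c.e., its increasing enumeration $d_0 < d_1 < \cdots$ is $A$-computable as a function of the index. Setting $F(n) = \max \sigma(d_n)$ gives a total function $F \leT A$, and for every computable $f$ the tracing hypothesis $f(m) \in \sigma(m)$ for almost all $m \in D$ specialises to $f(d_n) \leq F(n)$ for almost all $n$.

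The main obstacle is that this only bounds the shifted function $f \circ d$, which is $A$-computable but not in general computable, so on its face it is weaker than Martin's condition. To bridge the gap I will compose with an $A$-computable ``inverse'' of $d$: set $m(n) = \min\{k : d_k \geq n\}$, which is well defined and $A$-computable because $(d_k)$ is strictly increasing and unbounded, and let $H(n) = F(m(n))$. For a non-decreasing computable $g$, the previous step gives $g(d_{m(n)}) \leq F(m(n)) = H(n)$ for almost all $n$, and then monotonicity of $g$ together with $d_{m(n)} \geq n$ yields $g \leq^* H$. Since every computable function is dominated by its non-decreasing majorant $\tilde f(n) = \max_{k \leq n} f(k)$, which is itself computable, $H \leT A$ dominates every computable function, and $A$ is high.
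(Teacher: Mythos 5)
Your proof has the right shape and in fact follows essentially the same route as the paper (reduce to Martin's dominant-function characterisation of highness, take a computable $f$ without loss of generality non-decreasing, bound it via $\max\sigma(\cdot)$ evaluated at a point of $D$ that sits at or above the argument), but there is one genuine error at the start of (ii) $\Rightarrow$ (i).

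You assert that ``since $D$ is infinite and $A$-c.e., its increasing enumeration $d_0<d_1<\cdots$ is $A$-computable.'' That is false in general: the strictly increasing enumeration of an infinite $A$-c.e.\ set $D$ is $A$-computable if and only if $D$ is $A$-\emph{computable}. (If one could compute $d_n$ from $n$ with oracle $A$, one could decide membership of any $x$ in $D$ by computing $d_0,d_1,\dots$ until some $d_n\geq x$.) In general this enumeration is only $A'$-computable. Since $D$ is allowed to be an arbitrary infinite $A$-c.e.\ set, the functions $d$ and hence $F$, $m(\cdot)$, and $H$ that you build on top of it are not known to be $A$-computable, and the argument collapses.

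The fix is small and brings you to the paper's proof. Rather than the \emph{smallest} element of $D$ that is $\geq n$, take the \emph{first element $\geq n$ that appears in a fixed $A$-computable enumeration of $D$}; call it $n_m$ for argument $m$. This is directly $A$-computable (enumerate $D$ relative to $A$ and wait for a value $\geq m$), satisfies $n_m\in D$ and $n_m\geq m$, and makes $h(m)=\max\sigma(n_m)$ total and $A$-computable. Then for computable non-decreasing $f$ and large $m$ one has $f(m)\leq f(n_m)\leq\max\sigma(n_m)=h(m)$, exactly as you intended. (Equivalently, one could first pass to an infinite $A$-decidable subset $D'\subseteq D$, for which the increasing enumeration \emph{is} $A$-computable, and note that a $(D,g)$-trace restricts to a $(D',g)$-trace; your argument then goes through verbatim.) Your extra detour through an explicit $F$ and an ``inverse'' $m(n)$ is not needed --- once $n_m$ is defined directly from $m$, the composition step disappears --- but it is not wrong, only the claimed computability of the increasing enumeration is.
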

\begin{proof}
(\ref{setotal}$)\Rightarrow($\ref{separtial}) is trivial.
For the reverse direction, it suffices by
Theorem~6 of Ref.~\citenum{Rupprecht:10} 
to show that (\ref{separtial}) implies $A$ is high.
So suppose $\sigma$ is a $(D,g)$-trace computed by $A$, that is,
$D$ is c.e. in $A$ and both $g$ and $\sigma$ are partially computable in $A$, tracing every computable
function; we wish to show that $A$ computes a function eventually
dominating every computable function.
Fix a computable enumeration of $D$, and for each $m\in\omega$ let
$n_m$ be
the first natural number greater than or equal to $m$ that appears in
this enumeration of $D$.
We define $h:\omega\to\omega$ by $h(m)=\max(\sigma(n_m))$.
Now, let $f$ be a computable function from $\omega$ to $\omega$;
%The function
%$n\mapsto \max\{f(m):m\leq n\}$ is computable from $f$ and dominates it,
%so
without loss of generality
we may assume that $f$ is non-decreasing.
There is some $m_0\in\omega$ such that for all $n\in D\smallsetminus m_0$,
$f(n)\in\sigma(n)$.  In particular, for all $m\geq m_0$,
$f(m)\leq f(n_m)\in\sigma(n_m)$, and so $f(m)\leq h(m)$.
\end{proof}

We turn now to the analogue of $\frd(\in^*_p)$: the property that $A$
computes a function not traceable by any partial trace.
With ``partial trace'' defined as in Definition~\ref{partialtrace},
this property coincides with being of hyperimmune degree, and the proof is straightforward.

%Such a function is in particular not computably traceable, so by \Todo{appropriate citation to Terwijn and Zambella}, such an $A$ is not low for Schnorr tests.

\begin{theorem}\label{thm:partialcomptrace}
The following are equivalent for any oracle $A$.
\begin{enumerate}
\item[(i)] $A$ is of hyperimmune degree.
\item[(ii)] $A$ computes a function not traceable by any partial trace.
\item[(iii)] For every partial computable function $g$, there exists $f\leT A$ such that $f$ cannot be traced by a $(D,g)$-trace for any c.e.~$D$.
\end{enumerate}
\end{theorem}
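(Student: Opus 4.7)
The plan is to prove the cycle (i)$\Rightarrow$(ii)$\Rightarrow$(iii)$\Rightarrow$(i), in which the first two steps are short and the third carries the content.

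For (i)$\Rightarrow$(ii) I would pick some $f \leq_T A$ not dominated by any computable function (available by (i)) and replace it by its running maximum $\hat f(n) := \max_{k \leq n} f(k)$, which is still $\leq_T A$ and still not dominated, but is now strictly increasing. Suppose, toward a contradiction, that some partial trace $\sigma$ on $(D,g)$ traces $\hat f$. Fix a c.e.\ enumeration of $D$ and for each $n \in \omega$ let $n^\ast$ be the first element of $D$ found in this enumeration with $n^\ast \geq n$. Then $H(n) := \max \sigma(n^\ast)$ is total computable, and for almost all $n$ one has $\hat f(n) \leq \hat f(n^\ast) \leq H(n)$ by monotonicity, contradicting the choice of $\hat f$.

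The step (ii)$\Rightarrow$(iii) is immediate: the witness $f$ to (ii) is untraceable by any partial trace at all, hence, for every choice of $g$, also by any $(D,g)$-trace.

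For (iii)$\Rightarrow$(i) I would argue the contrapositive and exhibit, from any hyperimmune-free $A$, a single partial computable $g$ (dominating the identity) against which every $f \leq_T A$ admits a $(D,g)$-trace, contradicting (iii). The main ingredient is the classical fact that when $A$ is of hyperimmune-free degree, every $f \leq_T A$ is actually $\leq_{tt} A$: writing $f = \Phi_e^A$ for some total Turing functional, compactness of $2^\omega$ makes the use bound $U_e(n) := \max_{X \in 2^\omega} \mathrm{use}(\Phi_e, X, n)$ a total computable function, and then $\sigma_e(n) := \{ \Phi_e^X(n) : X \in 2^{U_e(n)}\}$ is a total computable trace of $f$ with $|\sigma_e(n)| \leq 2^{U_e(n)}$. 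I would build $g$ with a sparse c.e.\ domain $\{d_0 < d_1 < \cdots\}$ and an increasing time budget $t_k$: at stage $k$, try to compute $U_e(d_k)$ for each $e \leq k$ within $t_k$ steps, and set $g(d_k)$ to exceed $2^{U_e(d_k)} + d_k + 2$ for every $e$ whose computation converged in time. For any total $\Phi_e$, the value $U_e(d_k)$ is eventually available within the budget, so $\sigma_e$ restricted to a cofinite tail of $\{d_k\}$ is a bona fide $(D,g)$-trace of $\Phi_e^A$.

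The main obstacle is executing (iii)$\Rightarrow$(i): $g$ must work uniformly against every $f \leq_T A$, but the use bounds $U_e$ of the associated total functionals range over all computable functions, so no single pointwise bound suffices. The remedy is the combination of Miller--Martin (providing total functionals with total computable use bounds) with the staged construction of $g$ on a sparse domain, so that each new $U_e$ can be absorbed cofinitely on $\dom(g)$ without requiring any global bound.
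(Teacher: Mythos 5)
Your (i)$\Rightarrow$(ii) and (ii)$\Rightarrow$(iii) match the paper essentially verbatim (running-max trick and triviality, respectively). The problem is in (iii)$\Rightarrow$(i), where you go a genuinely different route (contrapositive, via the Miller--Martin theorem on truth-table reducibility) but the construction of $g$ has a real gap.

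The gap: you fix computable sequences $d_0<d_1<\cdots$ and budgets $t_0<t_1<\cdots$ in advance and assert that ``for any total $\Phi_e$, the value $U_e(d_k)$ is eventually available within the budget.'' That is not true. As $e$ ranges over all everywhere-total tt-functionals, the use bounds $U_e$ range over a dominating family of computable functions, and in particular the map $n\mapsto U_e(n)$ (hence the running time of $U_e$) can outgrow any prescribed computable schedule $k\mapsto t_k$. For such an $e$, $U_e(d_k)$ will fail to converge within $t_k$ steps for infinitely many (in fact almost all) $k$, so $g(d_k)$ will never incorporate $2^{U_e(d_k)}$ cofinitely, and $\sigma_e$ restricted to a cofinite tail of $\{d_k\}$ will not satisfy the size bound $|\sigma_e(n)|<g(n)$. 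The root issue is that a single c.e.\ domain with a single time budget cannot simultaneously accommodate all $e$; you need the domain (or at least the target set $D$) to depend on $e$.

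The repair is to give each index its own column, which is exactly what the paper does --- and in fact the paper's argument shows you do not need Miller--Martin at all. Taking $g(\langle e,n\rangle)=\varphi_e(\langle e,n\rangle)+\langle e,n\rangle + 2$ (partial computable, c.e.\ domain, dominates the identity) and, given (iii), an $f\leT A$ not traced by any $(D,g)$-trace, one argues directly: if some total $\varphi_e$ dominated $f$, then $D=\{\langle e,n\rangle:n\in\omega\}$ together with $\sigma(x)=\{0,\dots,\varphi_e(x)\}$ would be a $(D,g)$-trace tracing $f$, a contradiction; hence $f$ is not dominated and $A$ is of hyperimmune degree. This is both direct (no contrapositive) and strictly more elementary than passing through use bounds and the $2^{U_e(n)}$-sized traces. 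Your Miller--Martin route could be patched by putting $U_e$ on column $e$ in the same way, but at that point it becomes clear the use bounds are doing no work that a dominating $\varphi_e$ does not already do.
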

\begin{proof}

(i) $\Rightarrow$ (ii): Let $f$ be a function computable from $A$ which is dominated by no computable function. We may assume that $f$ is increasing. If $f$ is traced by a $(D,g)$-trace $\sigma$ then we let $h(n)=\max(\sigma(x_n))$ where $x_n$ is the first number greater than or equal to $n$ that is enumerated in $D$. Then $h+1$ is a computable function dominating $f$, which is impossible.

(ii) $\Rightarrow$ (iii): Trivial.

(iii) $\Rightarrow$ (i): Let $g(\langle e,n\rangle)=\varphi_e(\langle e,n\rangle)+1$, where $\varphi_e$ is the $e^{th}$ partial computable function. Let $f$ be an $A$-computable function that cannot be traced by any $(D,g)$-trace. Then we claim that $f$ cannot be dominated by a computable function. If $\varphi_e$ dominates $f$ then we take $D=\{\langle e,n\rangle:  n \in \omega \}$ and take $\sigma(x)=\{0,\cdots,\varphi_e(x)\}$. Clearly $\sigma$ is a $(D,g)$ trace tracing $f$, contradiction.
\end{proof}

So the notion of ``partial computably traceable" coincides with being hyperimmune-free if the bound $g$ is allowed to be partial. If $g$ is required to be total, however, we obtain a different notion.
This latter notion is obviously still weaker than being computably traceable, but is now strictly stronger than being hyperimmune-free. In fact:

\begin{proposition}
Suppose that there is a total computable function $g$ such that for every $f\leT A$ there is a $(D,g)$-trace tracing $f$. Then $A$ is of hyperimmune-free degree and not DNR.
\end{proposition}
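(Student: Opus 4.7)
The first conclusion is immediate from Theorem~\ref{thm:partialcomptrace}: the given total (hence partial computable) $g$ witnesses that clause (iii) of that theorem fails at $g$, so clause (i) also fails, and $A$ is of hyperimmune-free degree.

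For ``not DNR,'' I would argue by contradiction. Suppose $A$ computes a DNR function $d$. Since $A$ is hyperimmune-free, $d$ is dominated by a computable function, so $A$ computes a DNR function with a computable bound; by Jockusch's theorem, $A$ then has PA degree. Since $\emptyset'$ is high and hyperimmune-free degrees are not high, $A$ does not compute $\emptyset'$, so by Stephan's theorem on PA degrees there is some $X\leT A$ which is Martin-L\"of random in $2^\omega$.

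The plan is to use the tracing hypothesis on a block encoding of $X$ to trap $X$ in a Martin-L\"of null set. Set $k(n):=\lceil\log_2 g(n)\rceil+2n+3$ and $r(n):=\sum_{j<n}k(j)$, so the blocks $I_n := [r(n),r(n)+k(n))$ are pairwise disjoint. Let $f(n)$ be the integer coding $X\upharpoonright I_n$; then $f\leT A$. By hypothesis there is a $(D,g)$-trace $\sigma\colon D\to[\omega]^{<\omega}$ for $f$, with $D$ infinite c.e.\ and $|\sigma(n)|<g(n)$, satisfying $f(n)\in\sigma(n)$ for all but finitely many $n\in D$. For $n\in D$ put $C_n := \{Y\in 2^\omega : Y\upharpoonright I_n \in \sigma(n)\}$, a clopen set with $\lambda(C_n)\leq g(n)/2^{k(n)}\leq 2^{-2n-3}$. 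Enumerate $D=\{d_1<d_2<\cdots\}$, so $d_l\geq l-1$, and let $H_m := \bigcup_{l\geq m} C_{d_l}$. Then $(H_m)_{m\in\omega}$ is uniformly $\Sigma^0_1$, and $\lambda(H_m)\leq\sum_{l\geq m}2^{-2d_l-3}\leq\sum_{l\geq m}2^{-2l-1}\leq 2^{-m}$, so $(H_m)$ is a Martin-L\"of test. The tracing condition gives $X\in C_{d_l}$ for cofinitely many $l$, hence $X\in H_m$ for every $m$. Thus $X\in\bigcap_m H_m$, contradicting Martin-L\"of randomness of $X$.

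The main technical step is the construction of $(H_m)$. The exponent $k(n)$ must exceed $\log_2 g(n)$ by a margin growing with $n$ so the geometric tails $\sum_{l\geq m}\lambda(C_{d_l})$ sum to at most $2^{-m}$; the blocks $I_n$ must be pairwise disjoint so the $C_n$ constrain independent segments of $X$; and $H_m$ must be taken as a tail union over $l\geq m$ so as to convert the ``cofinitely many $n\in D$'' of the tracing condition into ``$X\in H_m$ for every $m$.'' The remaining inputs---Jockusch's theorem that a set computing a DNR function with a computable bound has PA degree, and Stephan's theorem that a PA degree not above $\emptyset'$ computes a Martin-L\"of random---are standard.
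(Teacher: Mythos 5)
The hyperimmune-free half of your argument is correct and is exactly the paper's: the given total computable $g$ falsifies clause (iii) of Theorem~\ref{thm:partialcomptrace}, hence clause (i) fails. Your Martin-L\"of test at the end is also sound as far as it goes: coding $X$ in disjoint blocks of length $k(n)>\log_2 g(n)+2n$, turning the trace values into clopen sets $C_n$ of measure at most $2^{-2n-3}$, and taking tail unions along an increasing enumeration of the c.e.\ set $D$ does yield a uniformly $\Sigma^0_1$ test of the right measure that captures any $X$ whose block code is traced.

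The genuine gap is the step that produces $X$: ``$A$ computes a DNR function dominated by a computable function, so by Jockusch's theorem $A$ has PA degree.'' Jockusch's theorem concerns DNR functions with a \emph{constant} bound (the $\{0,\dots,k-1\}$-valued case, $k\ge 2$); for DNR functions that are merely computably bounded the implication is false. A counterexample is discussed in this very paper: the Kumabe--Lewis set (Ref.~\citenum{Kumabe.Lewis:09}) is DNR and of hyperimmune-free degree, so it computes a computably bounded DNR function, yet it is of minimal degree and therefore not of PA degree (every PA degree strictly bounds another PA degree); moreover, as remarked after Question~\ref{qu:what's left}, it computes no Schnorr random, hence no Martin-L\"of random. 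So neither ``$A$ is PA'' nor the weaker fact your test actually needs, ``some $X\le_T A$ is Martin-L\"of random,'' follows from hyperimmune-freeness plus DNR-ness --- and up to that point your argument has not used the tracing hypothesis at all, so there is nothing available to repair it. (A smaller point: what you call Stephan's theorem goes the other way --- a PA degree not above $\emptyset'$ is \emph{not} of ML-random degree; the fact that a PA degree computes a ML-random is the basis property of PA degrees applied to the $\Pi^0_1$ class of ML-randoms.) The paper's proof avoids randomness entirely: given any $f\le_T A$, take a $(D,g)$-trace $\sigma$ with $f(n)\in\sigma(n)$ for all $n\in D$, view $\sigma$ as a c.e.\ trace with computable bound $g$, and apply Theorem~6.2 of Ref.~\citenum{Kjos.ea:2011}, which characterizes non-DNR-ness precisely by this kind of infinitely-often c.e.\ traceability of all $A$-computable functions; a self-contained argument would have to prove something of that strength rather than route through PA degrees.
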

\begin{proof}
%First of all, we can assume that $g(n)=n$, and since we are only concerned with tracing total $A$-computable functions, we can apply the usual trick.
By Theorem \ref{thm:partialcomptrace} we get that $A$ is of hyperimmune-free degree. To show that $A$ is not DNR, let $f$ be $A$-computable. Let $\sigma$ be a $(D,g)$-trace tracing $f$; we may assume that in fact $f(n)\in\sigma(n)$ for all
$n\in D$. Viewing $\sigma$ as a c.e.~trace then allows us
by Ref.~\citenum{Kjos.ea:2011} Theorem~6.2 to deduce that $A$ is not DNR.
\end{proof}

In other words, if we consider
the property that for every total computable $g$, there is
an $f\leT A$ such that no $(D,g)$-trace traces $f$,
then this property lies between ``not low for weak 1-genericity''
and ``not low for Schnorr tests'' in Figure~\ref{Fig:computability_diagram}.
This is analogous to the fact in the set-theoretic setting that
$\cof(\calM)\leq\frd(\in^*_p)\leq\cof(\calN)$.

\subsection{Final comments}
The {\em splitting number $\frs$} is the least size of a subset   $\+ S $ of $\+ P (\omega)$ such that every infinite  set is split by a set in $\+ S$ into two infinite parts. The analog in computability theory is {\em $r$-cohesiveness}: an infinite set $A \subseteq \omega$ is $r$-cohesive if it cannot be split into two infinite parts by a computable set; that is, if $B$ is computable, then either $A \subseteq^* B$ or $A \cap B$ is finite. $A$ is {\em cohesive} if  it cannot be split into two infinite parts by a computably enumerable set. Clearly, cohesive implies $r$-cohesive. ZFC proves that  $\frs \leq \frd, \non (\calE)$ (Ref.~\citenum{Blass:10} Theorems 3.3 and 5.19). On the computability side, $r$-cohesive implies both being of hyperimmune degree, and weakly Kurtz engulfing. On the other hand, $\frs < \add(\calN)$~\cite{Judah.Shelah:88} (see also Ref.~\citenum         {Bartoszynski.Judah:book} Theorem 3.6.21), $\frs > \cov(\calE)$, and $\frs > \frb$ are known to be consistent (the latter two follow from Ref.~\citenum{Blass.Shelah:87}, see the next paragraph). The first has no analog in recursion theory for high implies cohesive~\cite{Jockusch:73}, while the last does by a result of Jockusch and Stephan~\cite{Jockusch.Stephan:93}, who showed that a cohesive set can be non-high. We do not know whether every ($r$-)cohesive degree   computes a Schnorr random.
% (so also meager engulfing and
%Schnorr engulfing).

The dual of the splitting number is the {\em unreaping number} $\frr$, the least size of a subset $\+ S$ of $[\omega]^\omega$ (the infinite subsets of $\omega$) such that every subset of $\omega$ is either almost disjoint from, or almost contains, a member of $\+ S$. To see the duality, consider the relation $R \subseteq [\omega]^\omega \times [\omega]^\omega$ defined by $\langle x,y \rangle \in R$ iff $y$ splits $x$ iff both $x \cap y$ and $x \setminus y$ are infinite. Then $\frs = \frd (R)$ and $\frr = \frb (R)$. The analog of $\frr$ is being of {\em bi-immune degree}, a property introduced by Jockusch\cite{Jockusch:69a}: $A \subseteq \omega$ is {\em bi-immune} if it splits every infinite computable set or, equivalently, if it splits every infinite computably enumerable set, that is, if neither $A$ nor its complement contain an infinite computable (or computably enumerable) set.  In ZFC $\frr \geq \frb, \cov (\calE)$ holds (Ref.~\citenum{Blass:10} Theorems 3.8 and 5.19). Similarly, Kurtz random (and thus also being of hyperimmune degree) implies bi-immune. Jockusch and Lewis~\cite{Jockusch_lewis} recently showed that DNR implies having  bi-immune degree, so that, in fact, not low for weak 1-generic implies bi-immune. This is very different from the situation in set theory, where $\frr < \frs$ (and thus also $\frr < \non(\calE)$ and $\frr < \frd$) is consistent~\cite{Blass.Shelah:87}. We do not know whether there is a weakly Schnorr-engulfing degree that is not bi-immune.

Rupprecht also briefly discusses the analogy between splitting/unreap\-ing and
$r$-cohesive/bi-immune in his thesis\cite[Theorems V.41, 42, 43]{Rup:The}; his treatment is less comprehensive than ours above.

For further open problems see Question~\ref{qu:what's left} above.

\bibliographystyle{plain}

\bibliography{BBNN}
\end{document}